\title{A Selection Principle and Products in Topological Groups}
\author{Marion Scheepers}
\date{March 20, 2022}
\newtheorem{theorem}{\bf Theorem}
\newtheorem{lemma}[theorem]{\bf Lemma}
\newtheorem{corollary}[theorem]{\bf Corollary}
\newtheorem{definition}[theorem]{\bf Definition}
\newtheorem{problem}[theorem]{\bf Problem}
\newcommand{\naturals}{\mathbb{N}}
\newcommand{\sone}{{\sf S}_1}
\newcommand{\gone}{{\sf G}_1}
\newcommand{\sfin}{{\sf S}_{fin}}
\newcommand{\gfin}{{\sf G}_{fin}}
\newcommand{\open}{\mathcal{O}}
\newcommand{\onbd}{\mathcal{O}_{nbd}}
\newcommand{\omeganbd}{\Omega_{nbd}}
\newcommand{\forces}{\mathrel{\|}\joinrel\mathrel{-}}
\newcommand{\poset}{{\mathbb P}}
\begin{document}

\maketitle
\begin{abstract}
 We consider the preservation under products, finite powers, and forcing, of a selection principle based covering property of $T_0$ topological groups. Though the paper is in part a survey, it contributes some new information, including:
 \begin{enumerate}
 \item{The product of a strictly o-bounded group with an o-bounded group is an o-bounded group - Corollary \ref{cor:stroboundedproducts}}
 \item{In the generic extension by a finite support iteration of $\aleph_1$ Hechler reals the product of any o-bounded group with a ground model $\aleph_0$ bounded group is an o-bounded group - Theorem \ref{thm:hechleriteration}}
 \item{In the generic extension by a countable support iteration of Mathias reals the product of any o-bounded group with a ground model $\aleph_0$ bounded group is an o-bounded group - Theorem \ref{thm:mathiasiteration}}
 \end{enumerate}
\end{abstract}
\section*{Introduction} 

In this paper we consider selection principles for open covers on topological space under the imposition of three major constraints: The topological spaces are assumed to be $\textsf{T}_0$, are assumed to be topological groups, and these topological groups are $\aleph_0$-bounded (a notion due to Guran, and defined below). 

Even under these three constraints there is a broad range of considerations regarding the relevant selection principles, and we shall also confine attention to a specific class of selection principles, and specific concerns regarding these. To give an initial indication of the scope of work considered here, recall: The following two selection principles, among several, are historically well-studied in several mathematical contexts: Let families $\mathcal{A}$ and $\mathcal{B}$ of sets be given. The symbol $\sfin(\mathcal{A},\mathcal{B})$ denotes the statement that there is for each sequence $(A_n:n\in\mathbb{N})$ of members of the family $\mathcal{A}$, a corresponding sequence $(B_n:n\in\mathbb{N})$ such that for each $n$, $B_n$ is a finite subset of $A_n$, and $\bigcup\{B_n:n\in\mathbb{N}\}$ is a set in the family $\mathcal{B}$. The symbol $\sone(\mathcal{A},\mathcal{B})$ denotes the statement that there is for each sequence $(A_n:n\in\mathbb{N})$ of members of the family $\mathcal{A}$, a corresponding sequence $(B_n:n\in\mathbb{N})$ such that for each $n$, $B_n$ is a member of $A_n$, and $\{B_n:n\in\mathbb{N}\}$ is a set in the family $\mathcal{B}$. It is well known that if $\mathcal{A} \subseteq \mathcal{B}$ and if $\mathcal{C}\subseteq \mathcal{D}$, then the following implications (more broadly illustrated in Figure \ref{fig:monotone}) hold: $\sfin(\mathcal{B},\mathcal{C}) \Rightarrow \sfin(\mathcal{A},\mathcal{D})$, $\sone(\mathcal{B},\mathcal{C}) \Rightarrow \sone(\mathcal{A},\mathcal{D})$, and $\sone(\mathcal{A},\mathcal{B})\Rightarrow \sfin(\mathcal{A},\mathcal{B})$.

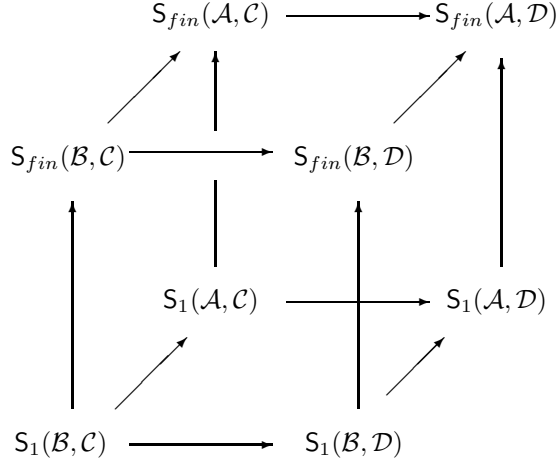
\begin{figure}[h]
\unitlength=.95mm
\begin{picture}(140.00,80.00)(10,10)
\put(45.00,20.00){\makebox(0,0)[cc]
{\shortstack {$\sone(\mathcal{B},\mathcal{C})$         } }}. 
\put(85.00,20.00){\makebox(0,0)[cc]
{\shortstack {$\sone(\mathcal{B},\mathcal{D})$   } }} 
\put(65.00,40.00){\makebox(0,0)[cc]
{\shortstack {$\sone(\mathcal{A},\mathcal{C})$   } }} 
\put(105.00,40.00){\makebox(0,0)[cc]
{\shortstack {$\sone(\mathcal{A},\mathcal{D})$   } }} 

\put(45.00,60.00){\makebox(0,0)[cc]
{\shortstack {$\sfin(\mathcal{B},\mathcal{C})$   } }} 
\put(85.00,60.00){\makebox(0,0)[cc]
{\shortstack {$\sfin(\mathcal{B},\mathcal{D})$   } }} 
\put(65.00,80.00){\makebox(0,0)[cc]
{\shortstack {$\sfin(\mathcal{A},\mathcal{C})$  } }} 
\put(105.00,80.00){\makebox(0,0)[cc]
{\shortstack {$\sfin(\mathcal{A},\mathcal{D})$  } }} 

\put(75.00,80.00){\vector(1,0){20.00}} 
\put(75.00,40.00){\vector(1,0){20.00}} 
\put(51.00,25.00){\vector(1,1){10.00}} 
\put(50.00,65.00){\vector(1,1){10.00}} 
\put(90.00,65.00){\vector(1,1){10.00}} 
\put(53.00,61.00){\vector(1,0){20.00}} 
\put(65.00,45.00){\line(0,1){12.00}} 
\put(65.00,64.00){\vector(0,1){11.00}} 
\put(45.00,25.00){\vector(0,1){29.00}} 
\put(85.00,25.00){\vector(0,1){29.00}} 
\put(89.00,27.00){\vector(1,1){8.00}} 
\put(105.00,45.00){\vector(0,1){29.00}} 
\put(53.00,20.00){\vector(1,0){20.00}} 
\end{picture}
\caption{Monotonicity Properties: $\mathcal{A}\subset\mathcal{B}$ and $\mathcal{C}\subset \mathcal{D}$.  \label{fig:monotone}}
\end{figure}

If instead of giving an entire antecedent sequence $(A_n:n\in\mathbb{N})$ of items from family $\mathcal{A}$ all at once for a selection principle and then producing a consequent sequence $(B_n:n\in\mathbb{N})$ to confirm that for example $\sfin(\mathcal{A},\mathcal{B})$ (or $\sone(\mathcal{A},\mathcal{B})$) holds, one can define a competition between two players, named ONE and TWO, where in inning $n$ ONE chooses an element $A_n$ from $\mathcal{A}$, and TWO responds with a $B_n$ from TWO's eligible choices. The players play an inning per positive integer $n$, producing a play
\begin{equation}\label{eq:play}
  A_1\; B_1\; A_2\; B_2\; \cdots\; A_m\; B_m\; \cdots
\end{equation}  
In the game named $\gfin(\mathcal{A},\mathcal{B})$ the play in (\ref{eq:play}) is won by TWO if for each $n$, $B_n$ is a finite subset of $A_n$ and $\bigcup\{B_n:n\in\mathbb{N}\}$ is an element of $\mathcal{B}$ - otherwise, ONE wins. In the game named $\gone(\mathcal{A},\mathcal{B})$ the play in (\ref{eq:play}) is won by TWO if for each $n$ $B_n\in A_n$, and $\{B_n:n\in\mathbb{N}\}$ is an element of $\mathcal{B}$. Observe that when ONE does not have a winning strategy in the game $\gfin(\mathcal{A},\mathcal{B})$, then $\sfin(\mathcal{A},\mathcal{B})$ is true. Similarly, when ONE does not have a winning strategy in the game $\gone(\mathcal{A},\mathcal{B})$, then $\sone(\mathcal{A},\mathcal{B})$ is true. The relationship between existence of winning strategies of a player and the corresponding properties of the associated selection principle is a fundamental question, and answers often reveal significant mathematical information. 

In this paper we consider the selection principle $\sfin(\mathcal{A},\mathcal{B})$ 
in the context where the families $\mathcal{A}$ and $\mathcal{B}$ are types of open covers arising in the study of topological groups. In the context of topological groups and the classes of open covers of these considered, there are some equivalences between the $\sfin(\cdot,\cdot)$ and $\sone(\cdot,\cdot)$ selection principles, as will be pointed out.

As noted initially, we throughout assume that the topological groups being considered have the $\textsf{T}_0$ separation property and thus, by the following classical theorem, the $\textsf{T}_{3\frac{1}{2}}$ separation property: 
\begin{theorem}[Kakutani, Pontryagin] \label{thm:T0} Any $T_0$ topological group is $T_{3\frac{1}{2}}$. 
\end{theorem}

In Section \ref{sec:aleph0bded} we briefly describe the resilience of $\aleph_0$-bounded groups under certain mathematical constructions and contrast these with the more constrained classical Lindel\"of property. In Section \ref{sec:products} we consider, for groups satisfying the targeted instance of the selection principle $\sfin(\mathcal{A},\mathcal{B})$,  
the preservation of the
selection property under the product construction. Though that there is a significant extant body of work on this topic, only some of these works and motivating mathematical questions relevant to the topic of Section \ref{sec:products} will be mentioned. In Section \ref{sec:cardinality} we make a brief excursion into exploring the cardinality of a class of groups emerging from product considerations in Section \ref{sec:products}. In Section \ref{sec:finpowers} we focus attention on groups for which finite powers satisfy the instance of $\sfin(\mathcal{A},\mathcal{B})$ being considered in this paper.  
In Section \ref{sec:Pgroups} we briefly return to a specific class of $\aleph_0$ bounded topological groups featured earlier in the paper. 

For background on topological groups we refer the reader to \cite{HR} and to \cite{Tkachenko}. For relevant background on forcing we refer the reader to \cite{Baumgartner}, \cite{Jech}, \cite{JechMF} and \cite{Kunen}.
Finally, as the reader will notice, this paper is part survey of known results, part investigation of refining or providing additional context for known results. The author would like to thank the editor of the volume for the flexibility in time to construct this paper.

\section{Open Covers and Fundamental Theorems}\label{sec:aleph0bded}

Besides the typical types of open covers considered for general topological spaces, there are also specific types of open covers considered in the context of topological groups. We introduce notation here for efficient reference to the various types of open covers relevant to this paper. Thus, let $(G,\otimes)$ denote a generic $\textsf{T}_0$ topological group, where $G$ is the set of elements of the group and $\otimes$ is the group operation. The symbol $id$ will denote the identity element of the group. It is also common practice to talk about the group $G$, without mentioning an explicit symbol for the operation. 

For an element $x$ of the group $G$ and for a nonempty subset $S$ of $G$, define
\begin{equation}\label{eq:pttimesset}
  x\otimes S = \{x\otimes g: g\in S\}.
\end{equation}
Observe that if $(G,\otimes)$ is a topological group, then when $S$ is an open subset of $G$, so is $x\otimes S$ for each element $x$ of $G$. Moreover, if $id$ is an element of $S$, then $x$ is an element of $x\otimes S$. Moreover, when $S$ and $T$ are nonempty subsets of $G$,
\begin{equation}\label{eq:settimesset}
 S\otimes T = \{x\otimes y:x\in S \mbox{ and }y\in T\}.
\end{equation}
Now we introduce notation for types of open covers of $(G,\otimes)$ to be considered here. 
\begin{itemize}
\item{$\open$: The set of all open covers of $G$.}
\item{$\onbd$: The set of all open covers of $G$ of the following form: For a neighborhood $U$ of $id$, $\open(U)$ denotes the open cover $\{x\otimes U:x\in G\}$ of $G$, and $\onbd$ denotes the collection $\{\open(U): U \mbox{ a neighborhood of } id\}$.}
\item{$\Omega$: An open cover $\mathcal{U}$ of $G$ is said to be an $\omega$-cover (originally defined in \cite{GN}) if $G$ itself is not a member of $\mathcal{U}$, and for each finite subset $F$ of $G$ there is a $U\in\mathcal{U}$ such that $F\subseteq U$. The symbol $\Omega$ denotes the set $\{\mathcal{U}:\; \mathcal{U} \mbox{ an }\omega \mbox{ cover of }G\}$}
\item{$\omeganbd$: The set of all open covers of $G$ of the following form: For a neighborhood $U$ of $id$, $\Omega(U)$ denotes the open cover $\{F\otimes U: F\subset G \mbox{ a finite set}\}$. The symbol $\omeganbd$ denotes the set of all open covers of the form $\Omega(U)$ of $G$.}
\item{$\Gamma$: An open cover $\mathcal{U}$ is a $\gamma$-cover (also introduced in \cite{GN}) if it is infinite and  for each $x\in G$, $x$ is a member for all but finitely sets in $\mathcal{U}$.  The symbol $\Gamma$ denotes the set $\{\mathcal{U}:\; \mathcal{U} \mbox{ a }\gamma \mbox{ cover of }G\}$}
\item{$\Lambda$: An open cover $\mathcal{U}$ is a large cover if for each $x\in G$, $x$ is a member of infinitely sets in $\mathcal{U}$. $\Lambda$ denotes the collection of large covers of $G$.}
\end{itemize}

Targeted properties related to topological objects, such as for example the preservation of a property of factor spaces in product spaces,  have led to the identification of several additional types of open covers for topological spaces. Some of these used in this paper are as follows:
\begin{itemize}
\item{$\mathcal{O}^{gp}$: An open cover $\mathcal{U}$ is an element of $\open^{gp}$ if it is infinite, and there is a partition $\mathcal{U} = \bigcup\{\mathcal{U}_n:n\in\mathbb{N}\}$ where for each $n$ the set $\mathcal{U}_n$ is finite, for all $m\neq n$, we have $\mathcal{U}_m\cap\mathcal{U}_n = \emptyset$, and each element of the underlying space is in each but finitely many of the sets $\bigcup\mathcal{U}_n$. We say that $\mathcal{U}$ is a \emph{groupable} cover.} 
\item{$\mathcal{O}^{wgp}$: An open cover $\mathcal{U}$ is an element of $\open^{wgp}$ if it is infinite, and there is a partition $\mathcal{U} = \bigcup\{\mathcal{U}_n:n\in\mathbb{N}\}$ where for each $n$ the set $\mathcal{U}_n$ is finite, for all $m\neq n$, we have $\mathcal{U}_m\cap\mathcal{U}_n = \emptyset$, and for each finite subset $F$ of the underlying space there is an $n$ such that $F\subseteq \bigcup\mathcal{U}_n$. We say that $\mathcal{U}$ is a \emph{weakly groupable} cover.}
\end{itemize}
From the definitions it is evident that the following inclusions hold among these types of open covers: 
$\Gamma\subset \open^{gp}\subset \open^{wgp} \subset \Lambda\subset \open$, 
 $\Gamma \subset \Omega\subset \open^{wgp}\subset \open$,  
$\omeganbd\subset \Omega$, $\omeganbd\subset \onbd$ and $\onbd\subset \open$.

For several traditional covering properties of topological spaces, natural counterparts are defined in the domain of topological groups by restricting the types of open covers considered in defining the covering properties. For example:
\begin{definition} A topological group is said to be 
\begin{enumerate}
\item{$\aleph_0$-bounded if it has the Lindel\"of property with respect to the family $\onbd$ of open covers: That is, each member of $\onbd$ has a countable subset that covers the group.}
\item{totally bounded (or pre-compact) if it is compact with respect to the family $\onbd$ of open covers: That is, each element of $\onbd$ has a finite subset covering the group.}
\item{$\sigma$-bounded if it is a union of countably many totally bounded subsets.}
\end{enumerate}
\end{definition}

Many of the properties of $\aleph_0$-bounded groups can be obtained from the following fundamental result:
\begin{theorem}[Guran]\label{thm:Guran1} A topological group is $\aleph_0$-bounded if, and only if, it embeds as a topological group into the Tychonoff product of second countable groups.
\end{theorem}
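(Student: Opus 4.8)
The plan is to prove both implications; the first is soft, while the second is where the hypothesis really bites. For the easy direction I would first record the convenient reformulation: $(G,\otimes)$ is $\aleph_0$-bounded if and only if for every neighbourhood $U$ of $id$ there is a \emph{countable} set $C\subseteq G$ with $G = C\otimes U$ (this is just the assertion that the cover $\open(U) = \{x\otimes U:x\in G\}$ has a countable subcover, written multiplicatively). From this form it is routine that the class of $\aleph_0$-bounded groups is closed under arbitrary Tychonoff products — a basic identity neighbourhood of $\prod_i G_i$ is trivial off a finite set $F$ of coordinates, so one takes $\prod_{i\in F}C_i$ filled out with identities — and under topological subgroups: given a neighbourhood $V\cap H$ of $id$ in a subgroup $H$, pick a symmetric $W$ in $G$ with $W\otimes W\subseteq V$, a countable $C$ with $G = C\otimes W$, and for each $c$ with $cW\cap H\neq\emptyset$ a point $h_c\in cW\cap H$; then $h_c^{-1}\otimes h\in (W\otimes W)\cap H\subseteq V\cap H$ for $h\in cW\cap H$, so $H$ is covered by the countably many sets $h_c\otimes(V\cap H)$. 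Since a second countable space is Lindel\"of, hence $\aleph_0$-bounded, any topological group embedding into a product of second countable groups is (isomorphic to) a subgroup of an $\aleph_0$-bounded group, hence $\aleph_0$-bounded.

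For the main direction, assume $G$ is $\aleph_0$-bounded; by Theorem \ref{thm:T0} it is Hausdorff. I would construct, for each open neighbourhood $U$ of $id$, a continuous homomorphism $\phi_U:G\to H_U$ into a second countable topological group together with an open neighbourhood $V_U$ of the identity of $H_U$ with $\phi_U^{-1}(V_U)\subseteq U$, and then verify that the diagonal map $\Phi = (\phi_U)_U : G\to\prod_U H_U$ is a topological group embedding. Indeed $\Phi$ is a continuous homomorphism since each $\phi_U$ is; it is injective because if $g\neq id$ then, picking an open $U\not\ni g$, the equality $\Phi(g) = \Phi(id)$ would give $\phi_U(g) = \mathrm{id}\in V_U$, hence $g\in\phi_U^{-1}(V_U)\subseteq U$, a contradiction; and $\Phi$ is open onto its image because for each neighbourhood $U$ of $id$ the trace on $\Phi(G)$ of the $U$-coordinate cylinder over $V_U$ pulls back under $\Phi$ into $U$. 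Thus everything reduces to the construction of $\phi_U$.

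For $\phi_U$ I would run the Birkhoff--Kakutani construction: refine $U$ to a decreasing sequence $U = U_0\supseteq U_1\supseteq\cdots$ of symmetric open neighbourhoods of $id$ with $U_{n+1}\otimes U_{n+1}\otimes U_{n+1}\subseteq U_n$, obtaining a continuous left-invariant pseudometric $\rho$ on $G$ with $\{x:\rho(id,x)<1\}\subseteq U$. Let $(X,d)$ be the metric space of $\rho$-classes. Left translation $g\cdot[h] := [g\otimes h]$ is well defined and, by left-invariance, acts on $X$ by surjective isometries, so it induces a homomorphism $\phi_U:G\to\mathrm{Isom}(X)$, where $\mathrm{Isom}(X)$ carries the topology of pointwise convergence. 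This is a topological group, and it is second countable because $X$ is separable — and separability of $X$ is exactly where $\aleph_0$-boundedness enters: for each $n$ pick a countable $C_n$ with $G = C_n\otimes U_n$; then for any $h\in G$, writing $h = c\otimes u$ with $c\in C_n$ and $u\in U_n$, one has $d([h],[c]) = \rho(c\otimes u,c) = \rho(id,u)$, which is small for large $n$, so $\bigcup_n\{[c]:c\in C_n\}$ is dense in $X$. Continuity of $\phi_U$ reduces to continuity of $g\mapsto\rho(g\otimes h_i,h_i)$ for finitely many $h_i$, which follows from continuity of $\rho$; and taking $V_U := \{f\in\mathrm{Isom}(X):d(f([id]),[id])<1\}$ one computes $\phi_U^{-1}(V_U) = \{g:\rho(id,g)<1\}\subseteq U$.

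The main obstacle, as indicated, is securing a target that is genuinely second countable, and this is precisely the role of the $\aleph_0$-bounded hypothesis, through the density argument above. Two points in the previous paragraph deserve explicit (though routine) care: that $\mathrm{Isom}(X)$ with pointwise convergence is a topological group — inversion is continuous since $f_\alpha\to f$ pointwise gives $d(f_\alpha^{-1}(x),f^{-1}(x)) = d(x,f_\alpha(f^{-1}(x))) = d(f(f^{-1}(x)),f_\alpha(f^{-1}(x)))\to 0$ — and that it is second countable, since for isometries the pointwise topology is already determined by the values at a countable dense subset of $X$, so $\mathrm{Isom}(X)$ embeds topologically into $X^{\omega}$. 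One could instead try to realize $\phi_U$ as the quotient map onto $G/N$ for a closed normal $N\subseteq U$ with $G/N$ metrizable (hence second countable, as $G/N$ inherits $\aleph_0$-boundedness), but then one must arrange that $N = \bigcap_n U_n$ be \emph{normal} in a group that need not have small invariant neighbourhoods, which is the genuinely delicate step; routing through the isometric left action sidesteps it, a kernel being automatically normal.
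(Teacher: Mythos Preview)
The paper does not supply a proof of Theorem~\ref{thm:Guran1}: it is stated as Guran's theorem, attributed to \cite{Guran}, and used as a black box to derive the structural closure properties that follow it. There is therefore no proof in the paper to compare yours against.

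That said, your argument is correct and is essentially the standard route to Guran's theorem. The easy direction is handled cleanly (note that the paper itself asserts, right after the theorem statement, the closure of $\aleph_0$-boundedness under subgroups and Tychonoff products, so your verification of those facts is exactly what the paper takes for granted). For the hard direction, the Birkhoff--Kakutani pseudometric followed by the left-regular representation into $\mathrm{Isom}(X)$ is the right mechanism, and you correctly isolate the place where $\aleph_0$-boundedness is used: separability of the quotient metric space $X$, which in turn forces $\mathrm{Isom}(X)$ to be second countable via the restriction-to-a-dense-set embedding into $X^{\omega}$. Your check that $\Phi$ is open onto its image is phrased slightly tersely but is correct: the trace on $\Phi(G)$ of the cylinder $\pi_U^{-1}(V_U)$ equals $\Phi(\phi_U^{-1}(V_U))\subseteq\Phi(U)$, so $\Phi(U)$ is a neighbourhood of the identity in $\Phi(G)$. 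The remark at the end about why one does not simply mod out by $\bigcap_n U_n$ is apt; the isometry-group detour is precisely the standard workaround for the lack of small invariant neighbourhoods.
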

The $\aleph_0$-boundedness property is resilient under several mathematical constructions. For example, any subgroup of an $\aleph_0$-bounded group is $\aleph_0$-bounded. Also, the Tychonoff product of any number of $\aleph_0$-bounded groups is an $\aleph_0$-bounded group. These two facts in particular imply:
\begin{lemma}\label{lemma:cardinality} There is for each infinite cardinal number $\kappa$ an $\aleph_0$-bounded group of cardinality $\kappa$.
\end{lemma}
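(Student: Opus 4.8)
The plan is to realize the required group as a finite-support subgroup of a large Tychonoff power of a fixed finite group, invoking the two resilience properties of $\aleph_0$-boundedness recorded just above (stability under subgroups and under arbitrary Tychonoff products) together with the trivial remark that every second countable topological group is $\aleph_0$-bounded.

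First I would fix the two-element group $H = \integers/2\integers$ with the discrete topology. Being finite, $H$ is second countable, hence Lindel\"of, hence $\aleph_0$-bounded (alternatively, apply Theorem \ref{thm:Guran1} to the trivial embedding of $H$ into itself). Next, form the Tychonoff product $H^\kappa$; since Tychonoff products of $\aleph_0$-bounded groups are $\aleph_0$-bounded, $H^\kappa$ is an $\aleph_0$-bounded group. However $\vert H^\kappa\vert = 2^\kappa$, which in general strictly exceeds $\kappa$, so $H^\kappa$ itself is not yet what is wanted.

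To bring the cardinality down to exactly $\kappa$, I would pass to the subgroup $G \le H^\kappa$ consisting of those $f \in H^\kappa$ with $f(\alpha) = id$ for all but finitely many $\alpha < \kappa$ (that is, the direct sum $\bigoplus_{\alpha<\kappa} H$ sitting inside the direct product). This set is closed under the coordinatewise operation and under inverses, hence is a subgroup, and since subgroups of $\aleph_0$-bounded groups are $\aleph_0$-bounded (in the subspace topology), $G$ is $\aleph_0$-bounded. Finally one counts: an element of $G$ is determined by a finite subset $F \subseteq \kappa$ (its support) together with a function $F \to H \setminus \{id\}$, so $\vert G\vert = \sum_{n\in\mathbb{N}} \vert [\kappa]^{n}\vert = \kappa$, using that $\kappa$ is infinite. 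Thus $G$ is an $\aleph_0$-bounded group of cardinality $\kappa$.

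There is no serious obstacle here; the one point requiring a moment's care is to take the finite-support subgroup rather than the full product, since the full product $H^\kappa$ overshoots the target cardinality. One could equally well replace $H$ by any nontrivial second countable group, such as $\integers$ or $\reals$, without affecting the cardinality computation.
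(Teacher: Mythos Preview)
Your proof is correct and is precisely the argument the paper has in mind: the paper does not spell out a proof but simply states that the two facts recorded just before the lemma (closure of $\aleph_0$-boundedness under subgroups and under arbitrary Tychonoff products) ``in particular imply'' the result, and your finite-support direct sum inside $H^\kappa$ is exactly the natural way to combine those two closure properties to hit a prescribed cardinality.
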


The $\aleph_0$-boundedness property and the total boundedness property are also resilient under forcing extensions of the set theoretic universe:
\begin{theorem}\label{boundedpreserve} If $(G,\otimes)$ is an $\aleph_0$-bounded (totally bounded) topological group and $({\mathbb P},<)$ is a forcing notion, then 
\[
  {\mathbf 1}_{\mathbb P}\forces``(\check{G},\otimes) \mbox{ is $\aleph_0$-bounded (respectively totally bounded)}".
\]
\end{theorem}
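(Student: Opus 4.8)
The plan is to argue directly from the definition, reducing the assertion in the extension to an absoluteness observation about the ground model. Fix a forcing notion $(\poset,<)$ and an $\aleph_0$-bounded topological group $(G,\otimes)$, with topology $\tau$ having basis $\mathcal{B}$. In any generic extension $V[\mathsf{G}]$ the set $\check{G}$ has as its elements exactly $\{\check{x}:x\in G\}$, the operation denoted $\otimes$ in the statement is the canonical check of the ground-model operation (so $\check{x}\otimes\check{y}=\check{z}$ iff $x\otimes z\vphantom{y}=z$ holds in $V$, and the identity of $\check G$ is $\check{id}$), and $\check{G}$ carries the topology generated by $\{\check{O}:O\in\tau\}$ as a basis. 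I would first note that $(\check{G},\otimes)$ is forced to be a topological group: continuity of the operation and of inversion are each expressed by a statement whose quantifiers range only over the ground-model points and basic open sets of $G$, and such statements are absolute. In particular, since $\check{O_1}\cap\check{O_2}=\check{(O_1\cap O_2)}$, the basic open sets of $\check{G}$ in $V[\mathsf{G}]$ are precisely the sets $\check{O}$ with $O\in\tau$, so $\{\check{V}:V \text{ a neighborhood of } id \text{ in } G\}$ is forced to be a neighborhood base at $id$ in $\check{G}$.

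Next I would isolate the absoluteness fact that does the real work. For $x\in G$ and $S\subseteq G$, the set $x\otimes S$ is defined by a formula with parameters $x,S,\otimes$, all lying in $V$, so it is forced that $\check{x}\otimes\check{S}=\{\check{y}:y\in x\otimes S\}$. Consequently, if $D=\{x_n:n\in\naturals\}\subseteq G$ and a neighborhood $V$ of $id$ satisfy $G=\bigcup_{n}x_n\otimes V$ in the ground model, then the statement ``$(\forall g\in G)(\exists n)\; g\in x_n\otimes V$'' has all its quantifiers ranging over ground-model objects, hence is preserved, giving $\mathbf{1}_{\poset}\forces\check{G}=\bigcup_{n}(\check{x_n}\otimes\check{V})$. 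The same applies verbatim when $D$ is finite.

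Now I would finish the $\aleph_0$-bounded case. Work in $V[\mathsf{G}]$ and let $\open(W)\in\onbd$ be arbitrary, where $W$ is a neighborhood of $id$ in $\check{G}$; by the first paragraph there is a ground-model neighborhood $V$ of $id$ with $\check{V}\subseteq W$. In $V$, $\aleph_0$-boundedness of $G$ yields a countable $D=\{x_n:n\in\naturals\}\subseteq G$ with $G=\bigcup_n x_n\otimes V$, so in $V[\mathsf{G}]$ we obtain $\check{G}=\bigcup_n(\check{x_n}\otimes\check{V})\subseteq\bigcup_n(\check{x_n}\otimes W)$, i.e.\ the countable subfamily $\{\check{x_n}\otimes W:n\in\naturals\}$ of $\open(W)$ covers $\check{G}$. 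As $\open(W)$ was an arbitrary member of $\onbd$, $(\check{G},\otimes)$ is $\aleph_0$-bounded in $V[\mathsf{G}]$, and since $\mathsf{G}$ was arbitrary this is forced by $\mathbf{1}_{\poset}$. The totally bounded case is identical, invoking a finite $D$ at the corresponding step.

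The only genuine subtlety — and the point I would be most careful about — is the bookkeeping around the topology of $\check{G}$: one must confirm that a neighborhood base at $id$ in the extension is furnished by the checks of ground-model neighborhoods, so that an arbitrary cover in $\onbd^{V[\mathsf{G}]}$ is refined by the check of one from $\onbd^{V}$; this is exactly where the convention that $\check{G}$ carries the topology generated by $\check{\tau}$ is used. An alternative route for the $\aleph_0$-bounded half, which sidesteps even this, is to invoke Guran's Theorem \ref{thm:Guran1}: fix in $V$ a topological embedding of $G$ into a Tychonoff product $\prod_{i\in I}H_i$ of second countable groups; the same map remains a topological embedding of $\check{G}$ into $\prod_{i\in I}\check{H_i}$ in $V[\mathsf{G}]$, each $\check{H_i}$ is second countable with the ground model's countable basis, so the product, and therefore its subgroup $\check{G}$, is $\aleph_0$-bounded there by Theorem \ref{thm:Guran1} applied in $V[\mathsf{G}]$. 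This second route does not yield the totally bounded assertion, so the direct argument is still needed for that part.
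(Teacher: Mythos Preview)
Your principal argument is correct and is essentially the paper's proof recast semantically: both reduce an arbitrary neighborhood of the identity in the extension to ground-model neighborhoods and then quote the ground-model $\aleph_0$-boundedness, the paper doing this syntactically by choosing a maximal antichain $A$ and neighborhoods $U_q$ with $q\forces``\check{U}_q\subseteq\dot{U}"$ and packaging the countable covers into a single name $\dot{X}$, while you work in $V[\mathsf{G}]$ and shrink $W$ to a single ground-model basic $\check{V}$. You are in fact more explicit than the paper about the convention that $\check{G}$ carries the topology generated by $\{\check{O}:O\in\tau\}$, which is precisely what makes such a shrinking possible.

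Your alternative route through Guran's Theorem~\ref{thm:Guran1} is a genuinely different argument that the paper does not give: it trades the direct covering computation for the structural fact that a subgroup embedding into a product of second countable groups is absolute, and that second countability of each factor survives passage to $V[\mathsf{G}]$. This is slick for the $\aleph_0$-bounded clause but, as you correctly observe, gives nothing for total boundedness, so the direct argument is still required there.
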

\begin{proof} Let $\dot{U}$ be a ${\mathbb P}$-name such that ${\mathbf 1}_{\mathbb P}\forces``\dot{U} \mbox{ is a neighborhood of the identity}"$. Choose a maximal antchain $A$ for ${\mathbb P}$ and for each $q\in A$ a neighborhood $U_q$ of the identity such that $q\forces``\dot{U} = \check{U}_q"$. We give an argument for $\aleph_0$-boundedness. The argument for totally bounded is similar.

Since $(G,\otimes)$ is $\aleph_0$-bounded, choose for each $q\in A$ a countable set $X_q:=\{x^q_n:n<\omega\}$ of elements of $G$ such that $X_q\otimes U_q=G$. Define $\dot{X} = \{(\check{x}^q_n,q):n<\omega \mbox{ and }q\in A\}$. Then $\dot{X}$ is a ${\mathbb P}$-name and 
\[
  {\mathbf 1}_{\mathbb P}\forces``\dot{X}\subseteq\check{G} \mbox{ is countable and }\dot{X}\otimes\dot{U} = \check{G}"
\]
Thus, ${\mathbf 1}_{\mathbb P}\forces``(\check{G},\otimes) \mbox{ is $\aleph_0$-bounded}"$. %
\end{proof} 

Proper forcing posets also preserves the property of not being $\aleph_0$-bounded:
\begin{theorem}\label{nonboundedpreserve}
Let $(G,\otimes)$ be a topological group which is not $\aleph_0$ bounded. Let $({\mathbb P},<)$ be a proper partially ordered set. Then
\[
  {\mathbf 1}_{\mathbb P}\forces``(\check{G},\otimes) \mbox{ is not $\aleph_0$-bounded}".
\]
\end{theorem}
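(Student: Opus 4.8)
The plan is to reduce non-$\aleph_0$-boundedness to a combinatorial statement about a single neighborhood of the identity and then to exploit the characteristic feature of proper forcing: every condition has an $(M,\mathbb{P})$-generic extension for every countable elementary submodel $M$.

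First I would record that $(G,\otimes)$ fails to be $\aleph_0$-bounded exactly when there is an open neighborhood $U$ of $id$ such that $X\otimes U\neq G$ for every countable $X\subseteq G$; this is just the statement that the cover $\open(U)\in\onbd$ has no countable subcover. Fix such a $U$. Two easy preservation facts will be used repeatedly. In any forcing extension $\check U$ is still an open neighborhood of $id$ in $(\check G,\otimes)$, because the topology of $\check G$ is generated by the ground model basis and a ground model set acquires no new elements under forcing. Moreover $\otimes$, $\check G$, and every $\check Y$ are evaluated in an extension exactly as in $V$, so for a ground model set $Y\subseteq G$ the term $\check Y\otimes\check U$ denotes, in the extension, the ground model set $Y\otimes U$; and since $\check a=\check b$ holds in an extension iff $a=b$, a condition forcing $\check Y\otimes\check U=\check G$ witnesses $Y\otimes U=G$ already in $V$.

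Next, suppose toward a contradiction that some condition $p\in\mathbb{P}$ forces that $(\check G,\otimes)$ is $\aleph_0$-bounded. Applying the $\aleph_0$-boundedness of $\check G$ to the neighborhood $\check U$ of $id$, there are a $\mathbb{P}$-name $\dot X$ and a condition $p'\leq p$ forcing that $\dot X$ is a countable subset of $\check G$ with $\dot X\otimes\check U=\check G$. As $G$, and hence $\dot X$, is nonempty, fix a name $\dot e$ for a surjection $\omega\to\dot X$. Now choose a regular $\theta$ large enough, a countable $M\prec H_\theta$ with $\mathbb{P},\dot X,\dot e,p',G\in M$, and an $(M,\mathbb{P})$-generic condition $q\leq p'$. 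The main step is to show that $q$ forces $\dot X\subseteq\check M$. For each $n\in\omega$ the set $D_n$ of conditions that decide $\dot e(\check n)$ to equal some $\check b$ is dense --- because $\dot e(\check n)$ is forced to name a ground model element --- and $D_n\in M$ by elementarity from $\dot e\in M$. By $(M,\mathbb{P})$-genericity $D_n\cap M$ is predense below $q$, so $q$ forces that the generic filter contains some $r\in D_n\cap M$; any such $r$ forces $\dot e(\check n)=\check b$ for a witness $b$ which, again by elementarity, lies in $M$. Hence $q$ forces $\dot e(\check n)\in\check M$ for every $n$, and therefore $q$ forces $\dot X=\mathrm{ran}(\dot e)\subseteq\check M$.

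Finally, set $Y=M\cap G$, a countable subset of $G$ lying in $V$. Since $q$ also forces $\dot X\subseteq\check G$, it forces $\dot X\subseteq\check Y$, and then $q$ forces the chain $\check G=\dot X\otimes\check U\subseteq\check Y\otimes\check U\subseteq\check G$, so $q$ forces $\check Y\otimes\check U=\check G$. By the preservation facts of the first paragraph this forces $\check{(Y\otimes U)}=\check G$, hence $Y\otimes U=G$ in $V$ --- contradicting the choice of $U$, since $Y$ is countable. Thus no condition forces $(\check G,\otimes)$ to be $\aleph_0$-bounded, i.e. $\mathbf{1}_{\mathbb{P}}\forces``(\check{G},\otimes) \mbox{ is not $\aleph_0$-bounded}"$. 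I expect the only genuine obstacle to be the main step of the third paragraph, namely that an $(M,\mathbb{P})$-generic condition confines a forced-countable subset of the ground model to $M$; the remaining points --- density of $D_n$, membership of $\dot e$ and $D_n$ in $M$, preservation of openness of $\check U$, and the absolute evaluation of $\check Y\otimes\check U$ --- are routine.
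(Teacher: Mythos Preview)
Your argument is correct and follows the same route as the paper: fix a neighborhood $U$ witnessing non-$\aleph_0$-boundedness, assume some condition forces a countable $\dot X\subseteq\check G$ with $\dot X\otimes\check U=\check G$, cover $\dot X$ by a countable ground-model set, and derive a contradiction. The only difference is that the paper invokes the countable-covering property of proper forcing as a black box (citing \cite{JechMF}, Proposition~4.1), whereas you unpack that property via an $(M,\mathbb P)$-generic condition; one small point to tidy is that your $D_n$ is only dense \emph{below} $p'$, but since $p'\in M$ the standard trick of replacing $D_n$ by $D_n\cup\{r:r\perp p'\}$ (or an equivalent formulation of genericity) handles this immediately.
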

{\flushleft{\bf Proof:}} For let $U$ be a neighborhood of the identity witnessing that $(G,\otimes)$ is not $\aleph_0$-bounded. Suppose that $p\in{\mathbb P}$ and ${\mathbb P}$-name $\dot{X}$ are such that $p\forces``\dot{X}\otimes \check{U} = \check{G} \mbox{ and }\dot{X}\subseteq\check{G} \mbox{ is countable}"$. Since ${\mathbb P}$ is a proper poset there is a countable set $C\subseteq G$ such that $p\forces \dot{X}\subseteq \check{C}"$ - \cite{JechMF}, Proposition 4.1. But then $p\forces``\check{C}\otimes \check{U}=\check{G}"$. Since all the parameters in the sentence forced by $p$ are in the ground model, we find the contradiction that $G = C\otimes U$.
$\Box$

Thus, when forcing with a proper forcing notion, a ground model topological group is $\aleph_0$-bounded in the generic extension if, and only if, it is $\aleph_0$-bounded in the ground model. Note, incidentally, that the same argument shows
\begin{theorem}\label{nonlindelofpreserve}
Let $(X,\tau)$ be a topological space which is not Lindel\"of. Let $({\mathbb P},<)$ be a proper partially ordered set. Then
\[
  {\mathbf 1}_{\mathbb P}\forces``(\check{X},\tau) \mbox{ is not Lindel\"of}".
\]
\end{theorem}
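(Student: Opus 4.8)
The plan is to mimic the proof of Theorem \ref{nonboundedpreserve} almost verbatim, replacing the role of the neighborhood $U$ of the identity and the covers $\open(U) = \{x\otimes U : x\in G\}$ by an arbitrary open cover $\mathcal{V}$ witnessing the failure of the Lindel\"of property, and replacing the $\aleph_0$-bounded counting argument by the general fact that a space is Lindel\"of iff every open cover has a countable subcover. So first I would fix an open cover $\mathcal{V}$ of $X$ with no countable subcover, and suppose toward a contradiction that some $p\in{\mathbb P}$ and some ${\mathbb P}$-name $\dot{X}$ satisfy $p\forces``\dot{X}\subseteq\check{\mathcal{V}}$ is countable and $\dot{X}$ covers $\check{X}$''. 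Here $\check{\mathcal{V}}$ is the canonical check-name for the set $\mathcal{V}$ of ground-model open sets.

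Next I would invoke properness exactly as in the previous proof: by Proposition 4.1 of \cite{JechMF}, since ${\mathbb P}$ is proper there is in the ground model a countable set $\mathcal{C}\subseteq\mathcal{V}$ such that $p\forces``\dot{X}\subseteq\check{\mathcal{C}}"$. Then $p$ forces that the countable subfamily $\check{\mathcal{C}}$ of $\check{\mathcal{V}}$ covers $\check{X}$; that is, $p\forces``\bigcup\check{\mathcal{C}} = \check{X}"$. Since $\mathcal{C}$, $X$, and the union operation on this fixed countable family are all absolute between the ground model and the extension — a countable union of a fixed family of sets is computed the same way — we conclude that already in the ground model $\bigcup\mathcal{C} = X$. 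Thus $\mathcal{C}$ is a countable subcover of $\mathcal{V}$, contradicting the choice of $\mathcal{V}$. Hence no such $p$ and $\dot{X}$ exist, which is exactly the assertion that ${\mathbf 1}_{\mathbb P}\forces``(\check{X},\tau)$ is not Lindel\"of''.

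The only point requiring a moment's care — and the closest thing to an obstacle — is the absoluteness step: one must be sure that ``$\dot{X}$ is a countable family of members of $\mathcal{V}$ covering $\check{X}$'' really does, after the properness reflection, reduce to a statement about a fixed ground-model countable family $\mathcal{C}$, so that no new reals or new open sets in the extension can help cover $X$. This is handled precisely by the cited properness proposition, which bounds any name for a countable subset of a ground-model set inside a ground-model countable set; after that, ``$\bigcup\mathcal{C}=X$'' is a $\Delta_0$-type fact about ground-model objects and hence absolute. The rest is bookkeeping identical to Theorem \ref{nonboundedpreserve}, which is why the excerpt remarks that ``the same argument'' applies.
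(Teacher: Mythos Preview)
Your argument is correct and is precisely the adaptation the paper intends: the paper gives no separate proof of Theorem \ref{nonlindelofpreserve}, merely noting that ``the same argument shows'' it, and what you have written is exactly that argument with $\mathcal{V}$ playing the role of $\open(U)$ and ``countable subcover'' replacing ``countable $X$ with $X\otimes U = G$''. The only cosmetic point is that you reuse the symbol $\dot{X}$ for the name of the countable subfamily while $X$ already denotes the space; choosing a different letter (say $\dot{\mathcal{W}}$) would avoid the clash, but this does not affect the mathematics.
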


When considering a strengthening of the $\aleph_0$-boundedness property, resilience of the stronger property under a corresponding mathematical constructions is more subtle. For example, the Lindel\"of property requires that for any open cover (not only ones from $\onbd$) there is a countable subset that still is a cover. Every Lindel\"of group is an $\aleph_0$-bounded group, but not conversely. The Lindel\"of property is not in general preserved by subspaces, products, or forcing extensions.  Similarly, for subclasses (determined by selection principles) of the family of $\aleph_0$-bounded groups, preservation of membership to the subclass under Tychonoff products and behavior under forcing is more subtle. Also questions regarding cardinality of members of the more restricted family are more delicate. 

\section{Products and Groups with the Property $\sfin(\onbd,\open)$}\label{sec:products}

In the notation established here, a topological group is said to be \emph{o - bounded} if it has the property $\sone(\omeganbd,\open)$. In the literature the notion of an o-bounded group is attributed to Okunev. In Theorem 3 of \cite{BKS} it is proven that for a topological group the three properties $\sone(\omeganbd,\open)$, $\sfin(\onbd,\open)$ and $\sfin(\omeganbd,\open)$ are equivalent. The property $\sfin(\onbd,\open)$ is also known as Menger boundedness.

In Problem 5.2 of \cite{Hernandez} Hernandez asked:
\begin{problem}\label{problem:obdproducts}
 Is the product of two topological groups, each satisfying the property $\sfin(\onbd,\open)$, a topological group satisfying the property $\sfin(\onbd,\open)$?
\end{problem} 

Subsequently it was discovered - see Example 2.12 of \cite{HRT} - that there are groups $G$ and $H$, each satisfying the property $\sfin(\onbd,\open)$, for which the group $G\times H$ does not satisfy the property $\sfin(\onbd,\open)$. Since subgroups of a group satisfying $\sfin(\onbd,\open)$ inherit the property $\sfin(\onbd,\open)$, for $G\times H$ to have the property $\sfin(\onbd,\open)$, each of the groups $G$ and $H$ must have at least the property $\sfin(\onbd,\open)$. Thus Example 2.12 of \cite{HRT} demonstrates that $G$ or $H$ should satisfy additional hypotheses to guarantee that the product has the property $\sfin(\onbd,\open)$. Under what conditions on $G$ and $H$ would the product group $G\times H$ satisfy the property $\sfin(\onbd,\open)$? 
A number of \emph{ad hoc} additional conditions on a topological group $G$ that guarantee that its product with a group $H$ also has the property $\sfin(\onbd,\open)$ have been discovered. Here are two examples of such conditions:

\begin{theorem}[\cite{Hernandez} Theorem 5.3]\label{thm:sigmacpt} If $G$ is a subgroup of a $\sigma$-compact
 topological group and $H$ is an $\sfin(\onbd,\open)$ group, then $G\times H$ satisfies $\sfin(\onbd,\open)$. 
\end{theorem}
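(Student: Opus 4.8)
The statement to prove is Theorem \ref{thm:sigmacpt}: if $G$ is a subgroup of a $\sigma$-compact topological group and $H$ has the property $\sfin(\onbd,\open)$, then $G\times H$ has $\sfin(\onbd,\open)$. Since $\sfin(\onbd,\open)$ is inherited by subgroups (already noted in the text) and since $\sigma$-compactness is inherited by closed subgroups but not arbitrary ones, the first step is to reduce to the case where $G$ itself is $\sigma$-compact: write $G\le K$ with $K=\bigcup_{n}K_n$, each $K_n$ compact, and observe $G\times H$ is a subgroup of $K\times H$, so it suffices to show $K\times H$ has $\sfin(\onbd,\open)$. Thus I would prove: \emph{the product of a $\sigma$-compact group with an $\sfin(\onbd,\open)$ group has $\sfin(\onbd,\open)$}.

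**Main argument.** By the equivalences from Theorem 3 of \cite{BKS} cited in the text, it is enough to verify $\sfin(\onbd,\open)$ for $K\times H$ using covers of the special form $\open(W)$, $W$ a neighborhood of the identity in $K\times H$. Given a sequence $(\open(W_n) : n\in\mathbb N)$ of such covers, shrink each $W_n$ to a box neighborhood $U_n\times V_n$ with $U_n$ a neighborhood of $id_K$ and $V_n$ a neighborhood of $id_H$. The key bookkeeping step is to split $\mathbb N$ into infinitely many infinite pieces $\mathbb N=\bigsqcup_{k}I_k$. For each fixed $k$, run the $\sfin(\onbd,\open)$ selector for $H$ against the sequence $(\open(V_n):n\in I_k)$ to obtain finite sets $F_n\subseteq H$ ($n\in I_k$) with $\bigcup_{n\in I_k}\{y\otimes V_n : y\in F_n\}$ covering $H$. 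Simultaneously, using $\sigma$-compactness of $K$: enumerate the compact pieces $K_k$ of $K$, and for $n\in I_k$ cover the (totally bounded) set $K_k$ by finitely many translates $x\otimes U_n$, $x\in E_n$ with $E_n\subseteq K$ finite. Then for $n\in I_k$ put $B_n=\{(x,y)\otimes(U_n\times V_n): x\in E_n,\ y\in F_n\}$, a finite subset of $\open(W_n)$ after refining back from the box. Any point $(a,b)\in K\times H$ lies in some $K_k$, so $a\in x\otimes U_n$ for some $x\in E_n$ and some $n\in I_k$; since the $H$-part of the selection covers all of $H$ across $I_k$, a routine re-indexing (intersecting the "$n$ works for $a$" set with the "$n$ works for $b$" set within $I_k$, both nonempty, and arranging them to meet — this is where one uses that the $H$-selector's output is a cover rather than just a large cover) produces a single $n$ for which $(a,b)\in$ some member of $B_n$. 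Hence $\bigcup_n B_n$ covers $K\times H$.

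**The obstacle.** The delicate point is exactly the coordination across the two coordinates: naively, the index $n$ that captures $a$ in the $K$-coordinate and the index $n$ that captures $b$ in the $H$-coordinate need not coincide. The $\sigma$-compact side is rigid — for $n\in I_k$ the translates of $U_n$ cover $K_k$, so \emph{every} sufficiently large $n\in I_k$ works for $a$ once we know $a\in K_k$ — but the $\sfin(\onbd,\open)$ side only guarantees \emph{some} $n\in I_k$ works for $b$. The fix is to exploit the asymmetry: because the $K$-coordinate works for cofinitely many $n\in I_k$ (indeed all of them, by construction, once $K_k\ni a$), whatever single $n\in I_k$ the $H$-selector hands us for $b$ is automatically good for $a$ too. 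Making this precise — in particular ensuring the reduction from box neighborhoods $U_n\times V_n$ back to the originally given $W_n$ is harmless, and that the partition $\bigsqcup_k I_k$ can be chosen before the selectors are invoked — is the part requiring care; the rest is standard translate-counting.
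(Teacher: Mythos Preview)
Your direct argument is correct (once the vague ``re-indexing'' of the second paragraph is replaced by the clean observation you make in your third paragraph: for $n\in I_k$ the chosen $K$-translates cover all of $K_k$, so whichever $n\in I_k$ the $H$-selector produces for $b$ automatically works for $a$ as well). The paper, however, takes a different route: rather than arguing directly, it derives Theorem~\ref{thm:sigmacpt} from the general product result Theorem~\ref{thm:productgroups} by first showing (Lemma~\ref{lemma:Th5.3}) that every infinite subgroup of a $\sigma$-compact group has the property $\sone(\omeganbd,\Gamma)$, and then applying Theorem~\ref{thm:productgroups} with $\mathcal{A}=\open$. Your approach is more elementary and self-contained; the paper's approach is less hands-on but isolates $\sone(\omeganbd,\Gamma)$ as the decisive hypothesis on the first factor, which is exactly the unification the section is after --- the same property also yields Theorem~\ref{thm:Pgp} and Corollary~\ref{cor:stroboundedproducts}. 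It is worth noting that your key asymmetry --- the compact piece $K_k$ is covered by the chosen translates for \emph{every} $n\in I_k$, not merely some --- is essentially the $\gamma$-cover behavior encoded in $\sone(\omeganbd,\Gamma)$, so the two arguments are closer in spirit than they first appear.
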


For the next example recall that a topological group is a $P$ group if, and only if, the intersection of countably many open neighborhoods of the identity element still is an open neighborhood of the identity element. More generally a topological space is a $P$ space if each countable intersection of open sets is an open set.

\begin{theorem}[\cite{HRT}, Theorem 2.4]\label{thm:Pgp} If $G$ is an $\aleph_0$-bounded $P$ group and the group $H$ satisfies the property $\sfin(\onbd,\open)$, then $G\times H$ satisfies $\sfin(\onbd,\open)$.
\end{theorem}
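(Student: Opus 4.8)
The plan is to prove that if $G$ is an $\aleph_0$-bounded $P$ group and $H$ satisfies $\sfin(\onbd,\open)$, then $G\times H$ satisfies $\sfin(\onbd,\open)$. First I would set up the combinatorial framework: by the equivalence of $\sfin(\onbd,\open)$ with $\sfin(\omeganbd,\open)$ (Theorem 3 of \cite{BKS}), it suffices to start from a sequence $(\Omega(W_n):n\in\naturals)$ of covers of $G\times H$, where each $W_n$ is a neighborhood of the identity $(id_G,id_H)$. Shrinking, we may assume each $W_n$ has the form $U_n\times V_n$ with $U_n$ a neighborhood of $id_G$ and $V_n$ a neighborhood of $id_H$ (and, passing to a subsequence-style bookkeeping, we can arrange the $U_n$ decreasing if convenient).

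The key step exploits the $P$-group hypothesis on $G$. Since $G$ is a $P$ group, the set $U = \bigcap_{n\in\naturals} U_n$ is an open neighborhood of $id_G$. Because $G$ is $\aleph_0$-bounded, $\open(U)$ has a countable subcover, so there is a countable set $\{g_k:k\in\naturals\}\subseteq G$ with $G = \bigcup_k g_k\otimes U$. Now fix an enumeration strategy that, in inning $n$, is responsible for a single group element $g_{k(n)}$ (choosing $k(\cdot):\naturals\to\naturals$ so that every $k$ is hit infinitely often, or using a pairing). The point is that $g_{k}\otimes U \subseteq g_k\otimes U_n$ for every $n$, so once we have chosen finite sets $F_n\subseteq H$ with $\bigcup_n F_n\otimes V_n = H$ — which exists since $H$ satisfies $\sfin(\omeganbd,\open)$ applied to $(\Omega(V_n):n\in\naturals)$ — the finite family $\{\{g_{k(n)}\}\otimes U_n \times \, f\otimes V_n : f\in F_n\}$ of basic open boxes, when we also throw in (via a second interleaved run) all the translates needed to cover the $G$-coordinate, witnesses the selection. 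Concretely, for each $n$ I would choose the finite subset of $\Omega(W_n)$ to be $\{(g_{k(n)}\otimes U_n)\times(f\otimes V_n): f\in F_n\}$, possibly enlarged to a genuine finite subset of $\Omega(U_n\times V_n)$ by taking the single set $(G_n')\times(H_n')$ containing the required finite product; one checks that the union over all $n$ covers $G\times H$ because any $(g,h)$ has $g\in g_k\otimes U\subseteq g_k\otimes U_n$ for all $n$ and $h\in f\otimes V_n$ for some $n$ with $k(n)=k$.

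The main obstacle I anticipate is the bookkeeping needed to simultaneously (i) run the $\sfin(\omeganbd,\open)$ selection on the $H$-side, which produces finite sets $F_n$ indexed by $n$, and (ii) march through the countably many translates $g_k\otimes U$ on the $G$-side, which also needs infinitely many innings. The clean resolution is to interleave: partition $\naturals$ into infinitely many infinite pieces, or use a bijection $\naturals\cong\naturals\times\naturals$, so that inning $n$ handles the pair $(k(n),m(n))$ where $m(n)$ tracks progress in the $H$-selection restricted to the block assigned to $k(n)$. Because $G$ is a $P$ group, the crucial inclusion $g_k\otimes U\subseteq g_k\otimes U_n$ holds \emph{uniformly in} $n$, which is exactly what makes this interleaving legitimate — we never "use up" the $G$-side translate, it remains valid at every later inning. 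A secondary technical point is justifying the reduction to product-box neighborhoods $U_n\times V_n$; this follows since in $G\times H$ every neighborhood of $(id_G,id_H)$ contains such a box, and replacing each $W_n$ by a smaller box only makes the cover $\Omega(W_n)$ finer, hence only strengthens what we must cover, so a witness for the box covers witnesses the original.
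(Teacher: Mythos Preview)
Your direct argument is correct: the interleaving you describe in the third paragraph---partitioning $\naturals = \bigsqcup_k I_k$ into infinite blocks and applying $\sfin(\omeganbd,\open)$ on $H$ separately to each subsequence $(\Omega(V_n))_{n\in I_k}$---is exactly what is needed. At inning $n\in I_k$ one selects $(\{g_k\}\times F_n)\otimes W_n \in \Omega(W_n)$, and any $(g,h)$ is covered because $g\in g_k\otimes U\subseteq g_k\otimes U_n$ for the appropriate $k$ and every $n$, while $h$ is caught by some $F_n\otimes V_n$ with $n\in I_k$. (Your second paragraph is slightly muddled---a single application of $\sfin$ to the full sequence $(\Omega(V_n))_n$ does not by itself guarantee coverage along each block $\{n:k(n)=k\}$---but you correctly anticipate and repair this in the third.)

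The paper, however, takes a different and more modular route. Rather than arguing directly on the product, it first establishes (Theorem~\ref{thm:PgpSel}, building on Lemmas~\ref{lemma:obPgp1} and~\ref{lemma:obPgp2}) that every $\aleph_0$-bounded $P$ group satisfies the stronger selection principle $\sone(\omeganbd,\Gamma)$, and then invokes the general product theorem (Theorem~\ref{thm:productgroups}, from~\cite{LB1}): if $G$ has $\sone(\omeganbd,\Gamma)$ and $H$ has $\sone(\omeganbd,\open)$, then so does $G\times H$. Your approach is more elementary and self-contained---essentially the original argument of~\cite{HRT}---while the paper's detour buys a unifying perspective: isolating $\sone(\omeganbd,\Gamma)$ as the hypothesis on one factor simultaneously yields this theorem, Theorem~\ref{thm:sigmacpt}, and Corollary~\ref{cor:stroboundedproducts}.
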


Though the conditions in Theorems \ref{thm:sigmacpt} and \ref{thm:Pgp} at first glance seem very different, a single unifying property in the literature implies both results, namely:
\begin{theorem}[\cite{LB1}, Theorem 6]\label{thm:productgroups}
Let $(G,\otimes)$ be a $T_0$ topological group satisfying the selection principle $\sone(\omeganbd,\Gamma)$. Let $\mathcal{A}$ be any of $\open$, $\Omega$ or $\Gamma$. If $(H,\triangle)$ is a topological group satisfying $\sone(\omeganbd,\mathcal{A})$, then the product group $G\times H$ satisfies $\sone(\omeganbd,\mathcal{A})$.
\end{theorem}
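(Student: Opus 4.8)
The plan is to prove the theorem by a direct diagonalization argument, running the selection witnesses for $G$ and for $H$ simultaneously and using the $\Gamma$-cover extracted from $G$ to absorb the "coordinate defect" that normally obstructs products. First I would set up the data: given a sequence $(\Omega(V_n) : n \in \mathbb{N})$ of members of $\omeganbd$ for the product group $G \times H$, I would pull back each $V_n$ to neighborhoods of the identity in the factors — more precisely, choose basic neighborhoods $U_n \subseteq G$ of $id_G$ and $W_n \subseteq H$ of $id_H$ with $U_n \times W_n \subseteq V_n$ (shrinking if necessary so that each $U_n \times W_n$ is itself of the required product form). This reduces the problem to handling the families $(\Omega(U_n) : n)$ in $G$ and $(\Omega(W_n) : n)$ in $H$, since a set of the form $F \otimes (U_n \times W_n)$ sits inside some $F' \otimes V_n$ once $F' \supseteq F$.

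Next I would apply the hypothesis $\sone(\omeganbd,\Gamma)$ to $G$ with the sequence $(\Omega(U_n):n)$ — but here one has to be slightly careful, because $\sone(\omeganbd,\Gamma)$ produces one selected set per inning, whereas to build an $\omega$-cover of the product one wants finite subsets. The standard trick is to re-index: split $\mathbb{N}$ into infinitely many infinite blocks and, on each block, extract from $G$ a $\gamma$-cover of $G$ via $\sone(\omeganbd,\Gamma)$, obtaining for block $k$ a sequence $F^G_{n} \otimes U_n$ ($n$ in block $k$) such that every finite subset of $G$ — indeed every point of $G$ — lies in all but finitely many of them. Simultaneously, feed the sequence $(\Omega(W_n):n)$ restricted to that block into the hypothesis $\sone(\omeganbd,\mathcal{A})$ for $H$, getting sets $F^H_n \otimes W_n$ whose union over that block is a member of $\mathcal{A}$ (an open cover, $\omega$-cover, or $\gamma$-cover of $H$ according to the case). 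Then set $B_n := (F^G_n \times F^H_n) \otimes (U_n \times W_n)$, a finite union of product-form sets, and enlarge it to a single set $F_n \otimes V_n$ with $F_n \supseteq F^G_n \times F^H_n$.

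The heart of the argument — and the step I expect to be the main obstacle — is verifying that $\{B_n : n \in \mathbb{N}\}$ (equivalently $\{F_n \otimes V_n : n\}$) is a member of $\mathcal{A}$ for the product $G \times H$. For $\mathcal{A} = \open$: given $(g,h) \in G \times H$, the $H$-side contributes some $n$ with $h \in F^H_n \otimes W_n$, but one needs the \emph{same} $n$ to work on the $G$-side, which is exactly where $\Gamma$ (not merely $\open$ or $\Omega$) for $G$ is essential — since $g$ lies in all but finitely many $F^G_n \otimes U_n$ within its block, any sufficiently large $n$ in the block that works for $h$ also works for $g$, so $(g,h) \in B_n$. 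For $\mathcal{A} = \Omega$: given a finite $K \subseteq G \times H$ with projections $K_G, K_H$, pick $n$ in an appropriate block with $K_H \subseteq F^H_n \otimes W_n$ (possible since the $H$-selection gave an $\omega$-cover) and large enough in that block that $K_G \subseteq F^G_n \otimes U_n$ (possible since a $\gamma$-cover is in particular an $\omega$-cover with the "all but finitely many" strengthening applied to each point of the finite set $K_G$); then $K \subseteq B_n$. For $\mathcal{A} = \Gamma$: given $(g,h)$, cofinitely many $n$ work on the $G$-side and cofinitely many on the $H$-side, so cofinitely many work for $(g,h)$. One should also double-check the bookkeeping that the resulting cover is infinite and that no $B_n$ accidentally equals the whole group (or handle that degenerate case by passing to a subsequence). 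I would organize the write-up so that the block decomposition is introduced once, and the three cases for $\mathcal{A}$ are then dispatched by the uniform observation that $\Gamma$ on the $G$-factor lets us postpone the choice of index until after the $H$-factor has committed.
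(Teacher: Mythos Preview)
The paper does not supply its own proof of this theorem; it is quoted from \cite{LB1} and then used as a black box to derive Theorems \ref{thm:sigmacpt} and \ref{thm:Pgp}. So there is no in-paper argument to compare against, and I will comment on the correctness of your sketch on its own terms.

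Your overall architecture is the right one --- pull each $V_n$ back to a product $U_n\times W_n$, run the two selection hypotheses on the factors, and recombine into sets $(F^G_n\times F^H_n)\otimes(U_n\times W_n)\subseteq F_n\otimes V_n$ --- and your diagnosis that $\Gamma$ on the $G$-factor is precisely what allows the two coordinates to synchronize is the heart of the matter. But the block decomposition is mis-assigned, and this creates a genuine gap in each of the three verifications.

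You apply $\sone(\omeganbd,\Gamma)$ to $G$ \emph{separately on each block}. A per-block $\gamma$-cover tells you only that $g$ lies in all but finitely many $F^G_n\otimes U_n$ \emph{within that block}; across the union of all blocks the exceptional set for $g$ may well be infinite. Your verification for $\mathcal{A}=\open$ then reads ``any sufficiently large $n$ in the block that works for $h$ also works for $g$'' --- but an open cover on the $H$-side may furnish only a \emph{single} index $n$ in that block with $h\in F^H_n\otimes W_n$, and nothing prevents that lone $n$ from being one of the finitely many $G$-exceptions in that block. The same objection applies verbatim to the $\Omega$ case. In the $\Gamma$ case it is worse: with both selections done per block, neither factor yields a global $\gamma$-cover, so ``cofinitely many $n$ work on each side'' is simply false.

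The repair is to swap the roles. Apply $\sone(\omeganbd,\Gamma)$ to $G$ \emph{once}, on the full sequence $(\Omega(U_n):n\in\mathbb{N})$, so that each $g\in G$ (hence each finite $K_G\subseteq G$) lies in $F^G_n\otimes U_n$ for all but finitely many $n$ \emph{globally}. For $\mathcal{A}=\Gamma$ do the same on the $H$-side; the termwise product of two global $\gamma$-covers is a $\gamma$-cover, and no blocks are needed. For $\mathcal{A}\in\{\open,\Omega\}$ keep the block decomposition, but on the $H$-side only: applying $\sone(\omeganbd,\mathcal{A})$ to $(\Omega(W_n):n\in I_k)$ for each block $I_k$ guarantees that every $h$ (respectively every finite $K_H$) is covered in \emph{every} block, hence by infinitely many $n$ overall. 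An infinite set of good $H$-indices meeting a cofinite set of good $G$-indices then produces the common $n$ you need, and the resulting family is the required member of $\mathcal{A}$ for $G\times H$.
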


To obtain Theorem \ref{thm:sigmacpt} from Theorem \ref{thm:productgroups}, observe
\begin{lemma}\label{lemma:Th5.3}
An infinite $\sigma$-compact group, as well as any infinite subgroup of it, has the property $\sone(\omeganbd,\Gamma)$.
\end{lemma}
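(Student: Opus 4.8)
I want to show that an infinite $\sigma$-compact group $G$ (and any infinite subgroup) satisfies $\sone(\omeganbd,\Gamma)$. Write $G = \bigcup_{n\in\naturals} K_n$ with each $K_n$ compact; without loss of generality the $K_n$ are increasing and each contains the identity. Given a sequence $(\Omega(U_k):k\in\naturals)$ from $\omeganbd$, where each $U_k$ is a neighborhood of $id$, I must pick a finite set $F_k\subset G$ for each $k$ so that $\{F_k\otimes U_k:k\in\naturals\}$ is a $\gamma$-cover of $G$ — i.e. every $x\in G$ lies in $F_k\otimes U_k$ for all but finitely many $k$.

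**Key steps.** First, split $\naturals$ into infinitely many infinite blocks $\naturals = \bigsqcup_{n} B_n$ (say $B_n$ the $n$-th column under a fixed bijection $\naturals\times\naturals\to\naturals$), and assign block $B_n$ the task of eventually covering $K_n$. For each $k\in B_n$: since $U_k$ is a neighborhood of $id$, shrink to a symmetric open $V_k$ with $V_k\otimes V_k\subseteq U_k$; by compactness of $K_n$ there is a finite $F_k\subset K_n$ with $K_n\subseteq F_k\otimes V_k\subseteq F_k\otimes U_k$. (One can even take $F_k$ so that $F_k\otimes U_k\supseteq K_n$ directly, without the symmetrization step — compactness alone suffices since $\{x\otimes U_k:x\in K_n\}$ covers $K_n$.) Then for $k\notin\bigcup_{n}B_n$ — there are none if the $B_n$ partition $\naturals$ — so in fact every $k$ lies in exactly one $B_n$, and we have defined $F_k$ for all $k$. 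Now check the $\gamma$-cover condition: fix $x\in G$; there is $N$ with $x\in K_n$ for all $n\geq N$; for every $k$ lying in some $B_n$ with $n\geq N$ we have $x\in K_n\subseteq F_k\otimes U_k$; the set of $k$ not of this form is $\bigcup_{n<N}B_n$, which need not be finite. So this naive block assignment fails — and that tells me the right move.

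**The fix (and the main obstacle).** The genuine obstacle is arranging that the exceptional set of indices for each point is \emph{finite}, not merely "small." The remedy: instead of dedicating disjoint blocks to the $K_n$, have \emph{every} index $k$ work simultaneously for an initial segment $K_{m(k)}$ of the exhaustion, where $m(k)\to\infty$. Concretely, for each $k$ use compactness of $K_k$ to choose a finite $F_k\subset K_k$ with $K_k\subseteq F_k\otimes U_k$; set $m(k)=k$. Then for fixed $x\in G$, choose $N$ with $x\in K_N$; for every $k\geq N$ we have $x\in K_N\subseteq K_k\subseteq F_k\otimes U_k$, so $x$ is missed by at most $\{F_1\otimes U_1,\dots,F_{N-1}\otimes U_{N-1}\}$ — a finite set. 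Hence $\{F_k\otimes U_k:k\in\naturals\}$ is a $\gamma$-cover, and it is infinite because $G$ is infinite (so no single $F_k\otimes U_k$ can repeat cofinally in a way that collapses the family — if needed, pass to a subfamily or note the sets are genuinely distinct since the $K_k$ grow; alternatively recall $\Gamma$ in this paper only requires the cover be infinite, which holds as the index set is $\naturals$). Finally, for an infinite subgroup $H\leq G$: the restriction of each $\Omega(U_k)$-type cover of $H$ comes from neighborhoods of $id$ in $H$, each of which extends to a neighborhood in $G$; apply the argument in $G$ using $K_k\cap H$ in place of $K_k$ (these are compact in $H$, or work in $G$ and intersect the resulting cover with $H$), and the $\gamma$-cover property is inherited. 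The only point demanding care is confirming the family is legitimately an \emph{infinite} $\gamma$-cover in the paper's sense; everything else is a direct compactness argument with no set-theoretic input.
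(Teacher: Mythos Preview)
Your ``fix'' is exactly the paper's proof: write $G=\bigcup_k K_k$ with $K_k$ compact and increasing, and for each $k$ use compactness of $K_k$ to pick a finite $F_k$ with $K_k\subseteq F_k\otimes U_k$; then any $x\in K_N$ lies in $F_k\otimes U_k$ for all $k\ge N$. The preliminary block-decomposition detour is unnecessary, but you diagnosed and corrected it yourself, so no harm done.

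The subgroup case, however, has a genuine slip. You write that the sets $K_k\cap H$ ``are compact in $H$''; this is false in general (take $G=\reals$, $H=\rationals$, $K_k=[-k,k]$). Your fallback ``work in $G$ and intersect the resulting cover with $H$'' is closer but still incomplete: if you choose $F_k\subset G$ with $K_k\subseteq F_k\otimes U_k$, then $(F_k\otimes U_k)\cap H$ need not be a set of the form $E\otimes V_k$ with $E\subset H$ finite, so it is not a legal selection from $\Omega_{nbd}(V_k)$ in $H$. The standard repair is to note that a subset of a compact set is totally bounded, and that for a totally bounded set one may choose the finitely many translation centers \emph{from the set itself}: pick symmetric open $W_k$ with $W_k\otimes W_k\subseteq U_k$, cover $K_k$ by finitely many $g_i\otimes W_k$, and for each $i$ with $(g_i\otimes W_k)\cap H\neq\emptyset$ replace $g_i$ by some $h_i\in(g_i\otimes W_k)\cap H$; then $K_k\cap H\subseteq\bigcup_i h_i\otimes(W_k^{-1}\otimes W_k)\cap H\subseteq E_k\otimes V_k$ with $E_k=\{h_i\}\subset H$. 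With this adjustment your argument goes through. (The paper leaves the subgroup case to the reader, so you are not being held to a stricter standard than the original, but the claim as written is incorrect.)
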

\begin{proof} We give the argument for infinite $\sigma$-compact groups, leaving the proof for subgroups of such groups to the reader.
Assume that $G$ is $\sigma$-compact, and write $G$ as the union $\bigcup\{G_n:n\in\mathbb{N}\}$, where for each $n$ $G_n$ is compact and $G_n\subset G_{n+1}$. Let $(\mathcal{U}_n:n\in\mathbb{N})$ be a sequence of $\omeganbd$ covers of $G$. For each $n$ fix a neighborhood $U_n$ of the identity element such that $\mathcal{U}_n = \{F\otimes U_n: F\subset G \mbox{ finite}\}$.

For each $n$, as $G_n$ is compact, choose a finite set $F_n\subset G$ such that $G_n\subseteq S_n = F_n\otimes U_n \in \mathcal{U}_n$. Then the sequence $(S_n:n\in\mathbb{N})$ witnesses $\sone(\omeganbd,\Gamma)$ for the given sequence $(\mathcal{U}_n:n\in\mathbb{N})$ of $\omeganbd$ covers of $G$.
\end{proof}
Next we show how to derive Theorem \ref{thm:Pgp} from Theorem \ref{thm:productgroups}. First, using the argument in Theorem 2.4 of \cite{Hernandez},
\begin{lemma}\label{lemma:obPgp1}
If $(G,\otimes)$ is an $\aleph_0$-bounded $P$ group, then it has the property $\sone(\onbd,\open)$
\end{lemma}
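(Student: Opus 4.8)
The plan is to show directly that in an $\aleph_0$-bounded $P$ group, every sequence of covers from $\onbd$ admits a selection witnessing $\sone(\onbd,\open)$, and in fact one can do better: the selected sets will actually form a $\gamma$-cover, because the $P$-space property lets us "diagonalize" the countably many neighborhoods into a single neighborhood. First I would fix a sequence $(\mathcal{U}_n : n \in \mathbb{N})$ of members of $\onbd$, and for each $n$ choose a neighborhood $U_n$ of $id$ with $\mathcal{U}_n = \open(U_n) = \{x \otimes U_n : x \in G\}$. Since $G$ is a $P$ group, $V := \bigcap_{n} U_n$ is again an open neighborhood of $id$. By $\aleph_0$-boundedness applied to the cover $\open(V) \in \onbd$, there is a countable set $\{x_k : k \in \mathbb{N}\} \subseteq G$ with $\bigcup_k (x_k \otimes V) = G$.

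The next step is to interleave this countable enumeration against the innings. For each $n$, note that $x_n \otimes V \subseteq x_n \otimes U_n$, and $x_n \otimes U_n \in \mathcal{U}_n$; so set $B_n := x_n \otimes U_n$. Then $\{B_n : n \in \mathbb{N}\} \supseteq \{x_n \otimes V : n \in \mathbb{N}\}$, and since the latter family already covers $G$, so does $\{B_n : n \in \mathbb{N}\}$. This witnesses $\sone(\onbd,\open)$ for the given sequence. (If one wants the stronger conclusion that the selection is large, or even a $\gamma$-cover, one would instead, at inning $n$, use the first $n$ of the $x_k$'s suitably — but for $\sone(\onbd,\open)$ the above suffices, and it is cleanest to keep the statement as given.)

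I do not anticipate a serious obstacle here; the only point requiring a little care is the bookkeeping that matches the countable cover $\{x_k \otimes V : k\}$ to the sequence of innings, i.e. making sure that the single neighborhood $V$ obtained from the $P$-property is simultaneously a subset of every $U_n$, so that the $n$-th selected set genuinely lies in $\mathcal{U}_n$. That is exactly what $V = \bigcap_n U_n$ delivers. The essential ideas — that countable intersections of identity-neighborhoods stay open in a $P$ group, and that $\aleph_0$-boundedness then produces a countable subcover of $\open(V)$ whose translates can be reattached to the coarser covers $\mathcal{U}_n$ — are both immediate from the definitions recalled in Section \ref{sec:aleph0bded}.
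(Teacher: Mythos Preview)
Your argument is correct and essentially identical to the paper's proof: take $V=\bigcap_n U_n$ (open by the $P$-property), use $\aleph_0$-boundedness to get a countable cover $\{x_n\otimes V:n\in\mathbb{N}\}$, and enlarge each $x_n\otimes V$ to $x_n\otimes U_n\in\mathcal{U}_n$. The only quibble is notational: the line ``$\{B_n\}\supseteq\{x_n\otimes V\}$'' should read $B_n\supseteq x_n\otimes V$ for each $n$, from which $\bigcup_n B_n=G$ follows.
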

\begin{proof}
Let a sequence $(\mathcal{U}_n:n\in\mathbb{N})$ of $\onbd$-covers of $G$ be given. For each $n$ choose a neighborhood $M_n$ of the identity element such that $\mathcal{U}_n = \mathcal{O}(M_n)$. Since $G$ is a $P$ group, $M = \bigcap\{M_n:n\in\mathbb{N}\}$ is an open set, and neighborhood of the identity element. Then $\mathcal{U} = \{x\otimes M:x\in G\}$ is a member of $\onbd$. Since $G$ is $\aleph_0$-bounded, fix a countable set $\{x_n:n\in \mathbb{N}\}$ of elements of $G$ such that $\{x_n\otimes M:n\in\mathbb{N}\}$ is a cover of $G$. Then for each $n$ also $x_n\otimes M \subseteq x_n\otimes M_n$. Thus $\{x_n\otimes M_n:n\in\mathbb{N}\}$ witnesses for the sequence $(\mathcal{U}_n:n\in\mathbb{N}\}$ that $(G,\otimes)$ has the property $\sone(\onbd,\open)$.
\end{proof}

Since $\sone(\onbd,\open)$ implies $\sfin(\onbd,\open)$, the following lemma extends the conclusion of Lemma \ref{lemma:obPgp1}:
\begin{lemma}\label{lemma:obPgp2}
If $(G,\otimes)$ is an $\aleph_0$-bounded $P$ group, then it has the property $\sone(\omeganbd,\Omega)$
\end{lemma}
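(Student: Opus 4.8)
The plan is to leverage Lemma~\ref{lemma:obPgp1} together with the $P$-group hypothesis, mimicking the structure of that proof but producing $\Omega$-covers from $\omeganbd$-covers rather than $\open$-covers from $\onbd$-covers. First I would be given a sequence $(\mathcal{U}_n:n\in\mathbb{N})$ of $\omeganbd$-covers of $G$, and for each $n$ fix a neighborhood $M_n$ of the identity with $\mathcal{U}_n = \Omega(M_n) = \{F\otimes M_n: F\subset G \mbox{ finite}\}$. Since $G$ is a $P$ group, $M = \bigcap\{M_n:n\in\mathbb{N}\}$ is again an open neighborhood of the identity, so $\Omega(M) = \{F\otimes M: F\subset G \mbox{ finite}\}$ is a single $\omeganbd$-cover (hence $\omega$-cover) of $G$.

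The key step is to extract from the single cover $\Omega(M)$ a countable $\omega$-subcover indexed by $n$ that can be ``transferred'' to the $\mathcal{U}_n$. Since $G$ is $\aleph_0$-bounded, fix a countable set $\{x_k:k\in\mathbb{N}\}$ with $\{x_k\otimes M:k\in\mathbb{N}\}$ a cover of $G$. For each $n$ set $F_n = \{x_1,\dots,x_n\}$ and $S_n = F_n\otimes M$; then $\{S_n:n\in\mathbb{N}\}$ is an $\omega$-cover of $G$, because any finite $E\subseteq G$ is covered by finitely many of the $x_k\otimes M$, hence by a single $S_n$ for $n$ large enough. Now observe that $S_n = F_n\otimes M \subseteq F_n\otimes M_n \in \Omega(M_n) = \mathcal{U}_n$. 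So defining $V_n = F_n\otimes M_n$, we have $V_n\in\mathcal{U}_n$ for each $n$, and since $S_n\subseteq V_n$ the family $\{V_n:n\in\mathbb{N}\}$ is an $\omega$-cover of $G$ (it refines upward an $\omega$-cover). Thus $(V_n:n\in\mathbb{N})$ witnesses $\sone(\omeganbd,\Omega)$ for the given sequence.

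The only point requiring a little care — and the place where the argument could go wrong if done carelessly — is verifying that the $S_n$ (and hence the $V_n$) genuinely form an $\omega$-cover rather than merely a large cover: one must use that the $F_n$ are increasing finite sets exhausting $\{x_k:k\in\mathbb{N}\}$, so that every finite subset of $G$ is swallowed by a single $F_n\otimes M$. Since $\omega$-covers are required not to contain $G$ as a member, one should also note that if some $V_n$ equals $G$ this is harmless — we may simply discard it, or observe that it does not obstruct the $\omega$-cover property of the remaining sets — or alternatively assume without loss of generality that no $M_n$ makes $F_n\otimes M_n = G$, which can be arranged since $G$ is not totally bounded in the nontrivial case; in any event this is a routine bookkeeping matter and not a genuine obstacle.
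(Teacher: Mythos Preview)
Your proof is correct but takes a genuinely different route from the paper. The paper argues indirectly: finite powers of $\aleph_0$-bounded $P$ groups are again $\aleph_0$-bounded $P$ groups, hence (by Lemma~\ref{lemma:obPgp1} or \cite{HRT} Theorem~2.4) each finite power satisfies $\sfin(\onbd,\open)$; it then invokes Theorems~2 and~4 of \cite{BKS}, which characterize $\sone(\omeganbd,\Omega)$ as equivalent to all finite powers having $\sfin(\onbd,\open)$. Your argument is direct and self-contained: collapse the neighborhoods via the $P$-group hypothesis, apply $\aleph_0$-boundedness once to the intersection $M$, and take increasing finite initial segments $F_n$. This avoids the external machinery entirely. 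In fact you prove more than you state: since the $S_n = F_n\otimes M$ are increasing with union $G$, the sequence $(S_n)$ --- and therefore $(V_n)$, as $S_n\subseteq V_n$ --- is a $\gamma$-cover, not merely an $\omega$-cover. So your argument actually yields $\sone(\omeganbd,\Gamma)$, which is the paper's Theorem~\ref{thm:PgpSel}, and does so more simply than either of the paper's two proofs of that result. The bookkeeping about $G$ possibly appearing as a member is, as you say, harmless; the paper itself ignores this point throughout.
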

\begin{proof}
Finite products of $P$ spaces are P spaces. But any (Tychonoff) product of $\aleph_0$-bounded groups is $\aleph_0$-bounded (see for example Proposition 3.2 in the survey \cite{Tkachenko}). Thus, any finite product of $\aleph_0$-bounded $P$ groups is an $\aleph_0$ bounded P-group. By \cite{HRT} Theorem 2.4, finite products of $\aleph_0$-bounded P groups are $\sfin(\onbd,\open)$. By Theorems 2 and 4 of \cite{BKS}, $G$ satisfies $\sone(\omeganbd,\Omega)$.
\end{proof}

And finally, we strengthen the conclusion of Lemma \ref{lemma:obPgp2}. 
\begin{theorem}\label{thm:PgpSel}\footnote{An alternative proof of Theorem \ref{thm:PgpSel} is given below by Lemmas \ref{lemma:obPgp3} and \ref{lemma:gptogamma}.}
Any $\aleph_0$-bounded $P$ group has the property $\sone(\omeganbd,\Gamma)$
\end{theorem}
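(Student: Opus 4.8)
The plan is to establish the stronger selection property $\sone(\omeganbd,\Gamma)$ directly from the $P$-group hypothesis, using the fact that a $P$-group is closed under countable intersections of neighborhoods of the identity together with $\aleph_0$-boundedness. First I would take an arbitrary sequence $(\mathcal{U}_n : n \in \mathbb{N})$ of $\omeganbd$-covers of $G$ and, for each $n$, fix a neighborhood $W_n$ of the identity with $\mathcal{U}_n = \Omega(W_n) = \{F \otimes W_n : F \subset G \mbox{ finite}\}$. The key trick is \emph{diagonalization into a single neighborhood}: set $M = \bigcap\{W_n : n \in \mathbb{N}\}$, which is an open neighborhood of the identity because $G$ is a $P$-group, exactly as in Lemma \ref{lemma:obPgp1}.

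Next I would exploit $\aleph_0$-boundedness applied to $M$: choose a countable set $\{x_k : k \in \mathbb{N}\}$ with $\{x_k \otimes M : k \in \mathbb{N}\}$ covering $G$. Now for each $n$, let $F_n = \{x_1, \dots, x_n\}$ and put $S_n = F_n \otimes W_n$. Since $F_n$ is finite, $S_n \in \Omega(W_n) = \mathcal{U}_n$, so the sequence $(S_n : n \in \mathbb{N})$ is a legitimate candidate for witnessing $\sone(\omeganbd,\Gamma)$. It remains to verify that $(S_n : n \in \mathbb{N})$ is a $\gamma$-cover of $G$. Given $g \in G$, pick $k$ with $g \in x_k \otimes M$; then since $M \subseteq W_n$ for every $n$, we get $g \in x_k \otimes W_n$ for every $n$, and in particular $g \in F_n \otimes W_n = S_n$ for every $n \geq k$. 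Thus $g$ belongs to all but finitely many of the $S_n$, which is precisely the $\gamma$-cover condition; and since $G$ is infinite the cover is infinite (or one can trivially pad), completing the argument.

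The only genuinely delicate point is the bookkeeping ensuring the $S_n$ form a \emph{$\gamma$}-cover rather than merely a cover or an $\omega$-cover: one must arrange the finite sets $F_n$ to be increasing initial segments of a fixed countable covering family, so that the single point $g$ lands in $x_k \otimes M$ for a fixed $k$ and then stays inside $S_n$ for all later $n$. This is the step where the $P$-group property is essential — it is what lets us replace the varying $W_n$ by the single $M \subseteq W_n$, so that membership in one $x_k \otimes M$ propagates to membership in every $x_k \otimes W_n$. Everything else is routine: $F_n \otimes W_n$ is automatically a member of $\Omega(W_n)$ by definition of $\omeganbd$, and the finiteness of each $F_n$ is immediate from the construction.

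I would also remark that this gives an independent route to Lemmas \ref{lemma:obPgp1} and \ref{lemma:obPgp2}: since $\Gamma \subseteq \open^{gp} \subseteq \Omega \subseteq \open$ and $\onbd$-covers can be handled by the same intersection argument (viewing $\mathcal{O}(M_n)$ as a special case), the conclusion $\sone(\omeganbd,\Gamma)$ subsumes all the weaker conclusions derived earlier via Theorems 2 and 4 of \cite{BKS} and the product machinery of \cite{HRT}. Combined with Theorem \ref{thm:productgroups}, this recovers Theorem \ref{thm:Pgp} as a special case, which is the intended payoff.
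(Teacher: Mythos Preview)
Your argument is correct and in fact more direct than the paper's. Both proofs begin the same way---intersect the given neighborhoods to a single open neighborhood $M$ of the identity via the $P$-group hypothesis---but from there they diverge. The paper invokes the previously established property $\sone(\omeganbd,\Omega)$ (Lemma~\ref{lemma:obPgp2}, which itself rests on results from \cite{BKS} and \cite{HRT}) applied to the constant sequence $(\Omega(M),\Omega(M),\dots)$ to extract an $\omega$-cover $\{S_n\otimes M\}$, and then takes increasing unions $G_n=\bigcup_{j\le n}S_j$ to upgrade this to a $\gamma$-cover. You instead apply $\aleph_0$-boundedness \emph{directly} to $M$ to get a single countable set $\{x_k\}$ with $\{x_k\otimes M\}$ covering $G$, and then take initial segments. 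This bypasses Lemma~\ref{lemma:obPgp2} and its external dependencies entirely, making the proof self-contained; the paper's route, by contrast, illustrates how the $\Omega$-selection property feeds into the $\Gamma$ one. The only cosmetic point is your remark that ``since $G$ is infinite the cover is infinite'': infiniteness of $G$ does not by itself force the set $\{S_n\}$ to be infinite, but your parenthetical about padding (or the observation that the totally bounded case is trivial) handles this, and the same technicality is present in the paper's proof.
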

\begin{proof} Let $(G,\otimes)$ be an $\aleph_0$-bounded $P$ group.  
Let $(V_n:n\in\mathbb{N}$ be a sequence of neighborhoods of the identity element of $G$. For each $n$ choose a finite set $F_n$ such that $\{F_n\otimes V_n:n\in\mathbb{N}\}$ is an $\omega$-cover of $G$. Put $V = \bigcap\{U_n:n\in\mathbb{N}\}$. Since $G$ is a $P$-space, $V$ is an open neighborhood of the identity. Also for each $n$ $\mathcal{U} = \{F\otimes V:F\subset G \mbox{ finite}\}$ is an $\omeganbd$ cover refining $\mathcal{U}_n = \{F\otimes V_n: F\subset G \mbox{ finite}\}$. Applying $\sone(\omeganbd,\Omega)$ to the sequence $(\mathcal{U},\mathcal{U},\mathcal{U},\cdots)$ fix for each $n$ a finite set $S_n\subset G$ such that $\{S_n\otimes V:n\in\mathbb{N}\}$ is an $\omega$-cover. For each $n$, set $G_n = \bigcup\{S_j:j\le n\}$. Then for each $n$, $G_n\otimes V_n\in \mathcal{U}_n$, and $\{G_n\otimes V_n:n\in\mathbb{N}\}$ is a $\gamma$-cover of $G$.
\end{proof}
{\flushleft Finally, Theorem \ref{thm:productgroups} and Theorem \ref{thm:PgpSel} imply Theorem \ref{thm:Pgp}.}

Continuing with the theme of providing a single unifying property for questions and claims regarding preserving the property $\sone(\omeganbd,\open)$ in products, we also give a result on a question from the literature. Tkachenko defined a topological group to be \emph{strictly o-bounded} if player TWO has a winning strategy in the game $\gone(\omeganbd,\open)$\footnote{Equivalently, TWO has a winning strategy in the game $\gfin(\onbd,\open)$} - \cite{Hernandez}. 
In Problem 2.4 of \cite{HRT} the authors ask
\begin{problem}\label{problem:HRT} Is it true that whenever $(G,\otimes)$ is a strictly o-bounded group and $(H,\triangle)$ satisfies the property $\sfin(\onbd,\open)$, then $G\times H$ also satisfies the property $\sfin(\onbd,\open)$? 
\end{problem}
Problem \ref{problem:HRT} was partially answered in Corollary 8 of \cite{LB1} for the case when the strictly o-bounded group  $(G,\otimes)$ is metrizable. Towards answering Problem \ref{problem:HRT} we generalize a part of Theorem 5 of \cite{LB1}.

\begin{theorem}\label{thm:sobishb} If player TWO has a winning strategy in the game $\gone(\omeganbd,\open)$ played on a $\textsf{T}_0$ topological group, then that group has the property $\sone(\omeganbd,\Gamma)$. 
\end{theorem}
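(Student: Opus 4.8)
The plan is to convert the winning strategy $\sigma$ for TWO in the game $\gone(\omeganbd,\open)$ into a witness for $\sone(\omeganbd,\Gamma)$ by a tree-of-plays argument against a single repeated $\omega_{nbd}$-cover, exploiting the $\aleph_0$-boundedness that $T_0$ groups automatically possess via Guran's theorem (Theorem \ref{thm:Guran1}). Given a sequence $(\mathcal{V}_n : n\in\mathbb{N})$ of $\omega_{nbd}$-covers, with $\mathcal{V}_n = \Omega(V_n)$ for a neighborhood $V_n$ of $id$, I would first reduce to the case of a single cover. The reduction runs as follows: let ONE play the cover $\Omega(V_1)$ in inning $1$; $\sigma$ responds with some $F_1 \otimes V_1$. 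Now ONE plays $\Omega(V_2)$; $\sigma$ responds with $F_2 \otimes V_2$. Continuing, ONE plays $\Omega(V_n)$ at inning $n$ and records TWO's response. Since $\sigma$ is winning, $\{F_n \otimes V_n : n\in\mathbb{N}\}$ is an open cover of $G$ for every such play — but this only gives a cover, not a $\gamma$-cover, because ONE gets only one chance at each $V_n$.

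To get a $\gamma$-cover I would instead run the tree argument: fix the single neighborhood $V$ and repeatedly feed $\sigma$ the cover $\Omega(V)$. Because $G$ is $\aleph_0$-bounded, each "level" of TWO's possible responses to all legal moves by ONE can be enumerated countably; more carefully, build a countably-branching tree of partial plays in which, at each node of length $n$, ONE enumerates (in an $\omega$-sequence) finite subsets $E$ of $G$ covering larger and larger finite portions of a fixed countable cofinal family of finite subsets of $G$, and TWO's strategy produces finite sets $S_E \subset G$ with $\bigcup$ of the responses along any branch covering $G$. Since along every branch the union is a cover and TWO's moves are finite, a standard diagonalization (picking at stage $n$ the union of the first $n$ responses along suitably chosen branches, as in the proof of Theorem \ref{thm:PgpSel}) produces finite sets $G_n \subset G$ that are increasing and such that $\{G_n \otimes V : n\in\mathbb{N}\}$ is a $\gamma$-cover of $G$. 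The key mechanism — turning "every branch yields a cover" into "one branch yields a $\gamma$-cover" — is the same interchange used in Galvin–Telgársky and Pawlikowski style arguments, and is exactly how one normally upgrades $\sone(\mathcal{A},\open)$-type strategies to $\gamma$-cover selections.

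The main obstacle I anticipate is the bookkeeping that guarantees the branches can be chosen so that the diagonalized union is a $\gamma$-cover rather than merely a large cover or an $\omega$-cover. Concretely, one must ensure that for each $x \in G$ there is an $N$ such that $x \in G_n \otimes V$ for all $n \ge N$; this requires arranging that once a point $x$ (equivalently, the finite set $\{x\}$, equivalently all sets in an increasing exhaustion of a countable cover witness set) has been "caught" at some level of the tree, it stays caught along the relevant branch, which in turn uses that TWO's responses are cumulative (we take unions $G_n = \bigcup_{j \le n} S_j$) and that the enumeration of finite subsets of $G$ at successive levels is monotone. Here the hypothesis that we are playing $\gone(\omega_{nbd},\open)$ — so ONE is allowed to present $\omega$-covers and TWO must handle all finite sets — is essential, since it is what lets a single point, once covered, be forced to remain covered by all later responses.

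Once the $\gamma$-cover $\{G_n \otimes V : n \in \mathbb{N}\}$ is produced for the repeated cover $\Omega(V)$ with $V = \bigcap_n V_n$ when $G$ happens to be a $P$-group, or more generally for the given sequence via the tree run against $(\Omega(V_n))_n$ directly, one reads off the required selection: for each $n$ pick $T_n \subseteq G$ finite with $T_n \otimes V_n \supseteq G_n \otimes V$ (possible since each element of $\Omega(V)$ is covered by a member of $\Omega(V_n)$ when $V \subseteq V_n$, and in the general tree run $G_n$ already comes from responses to $\Omega(V_n)$), and conclude that $\{T_n \otimes V_n : n\in\mathbb{N}\}$ is a $\gamma$-cover witnessing $\sone(\omega_{nbd},\Gamma)$ for $(\mathcal{V}_n)_n$. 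I expect the remaining details — verifying that the tree is well-founded in the appropriate sense and that the enumerations used are genuinely available from $\aleph_0$-boundedness — to be routine.
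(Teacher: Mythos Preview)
Your plan has a genuine gap: the central mechanism that turns a winning strategy for TWO into a $\gamma$-selection is missing. In the paper's proof one does \emph{not} feed $\sigma$ a single repeated cover $\Omega(V)$; doing so yields exactly one play (a strategy is a function of ONE's move history, so identical histories give identical responses), not a tree. Instead, the tree is indexed by finite sequences $\tau=(n_1,\ldots,n_k)$ of indices into the \emph{given} sequence $(U_n)$ of neighborhoods, and the decisive definition is
\[
G_\tau \;=\; \bigcap_{n\in\mathbb{N}} \sigma\bigl(\Omega(U_{n_1}),\ldots,\Omega(U_{n_k}),\Omega(U_n)\bigr),
\]
the intersection of TWO's responses over \emph{all} possible next moves by ONE. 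Two things follow immediately: (i) $G=\bigcup_\tau G_\tau$ (else some $x$ avoids one response at every node, producing a $\sigma$-play that misses $x$); and (ii) each $G_\tau$ sits inside a single member of $\Omega(U_n)$ for every $n$. Enumerating the $\tau$'s and covering $G_{\tau_1}\cup\cdots\cup G_{\tau_k}$ by one set $F_k\otimes U_k\in\Omega(U_k)$ then gives the $\gamma$-cover directly. Your sketch never isolates this intersection step, and without it the ``diagonalization'' you allude to has nothing to diagonalize over.

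Several subsidiary points in your plan are also off. ONE's moves are covers $\Omega(U_n)$ (equivalently neighborhoods of $id$), not ``finite subsets $E$ of $G$'', so the tree you describe is not a tree of legal plays. The reduction to $V=\bigcap_n V_n$ works only in $P$-groups, and you acknowledge this but offer no substitute in the general case; the paper's argument needs no such reduction. Finally, $\aleph_0$-boundedness and Guran's theorem play no role: the countability that makes the tree enumerable comes from $^{<\omega}\mathbb{N}$, not from any countable cover of $G$. The invocation of ``Galvin--Telg\'arsky and Pawlikowski style'' is in the right spirit, but the relevant idea from that family is precisely the intersection-over-next-moves decomposition, which your proposal does not contain.
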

\begin{proof} Let $(G,\otimes)$ be a strictly o-bounded group. By Lemma \ref{lemma:Th5.3} we may assume it is not totally bounded. Assume that TWO has a winning strategy in the game, say it is $\sigma$. Let $(U_n:n\in\naturals)$ be a sequence of neighborhoods of the identity, each witnessing that the group is not totally bounded. For each $n$ let $\Omega(U_n)$ denote $\{F\otimes U_n:F\subset G\mbox{ finite}\}$, an element of $\omeganbd$ for $G$..

Then $(\Omega(U_n):n\in\naturals)$ is a sequence of elements of $\omeganbd$. In the game $\gone(\omeganbd,\open)$ ONE chooses elements of $\omeganbd$ and TWO selects members of ONE's moves. Following the construction in the proof of 1$\Rightarrow 2$ of Theorem 5 of \cite{LB1}, define the following subsets of $G$:
\begin{equation}\label{firstmove}
G_{\emptyset} = \bigcap_{n\in\naturals}\sigma(\Omega(U_n)).
\end{equation}
For $\tau = (n_1,\cdots,n_k)$ a finite sequence of positive integers, define
\begin{equation}\label{latermove}
G_{\tau} = \bigcap_{n\in\naturals}\sigma(\Omega(U_{n_1}),\, \cdots,\, \Omega(U_{n_k}),\, \Omega(U_n)).
\end{equation}

{\flushleft{\bf Claim 1: }} $G = \bigcup_{\tau\in\,^{<\omega}\naturals} G_{\tau}$\\ 
 For suppose on the contrary that $x\in G$ is not an element of the union $\bigcup_{\tau\in\,^{<\omega}\naturals} G_{\tau}$. As $x$ is not in $G_{\emptyset}$, choose $n_1$ with $x\not\in \sigma(\Omega(U_{n_1}))$. Then as $x$ is not in $G_{n_1}$ choose an $n_2$ with $x\not\in \sigma(\Omega(U_{n_1}),\Omega(U_{n_2}))$, and so on. In this way we find a $\sigma$-play of the game during which TWO never covered $x$, contradicting the hypothesis that $\sigma$ is a winning strategy for TWO. 

{\flushleft{\bf Claim 2:}} For each finite sequence $\tau$ of positive integers, and for each $n$ there is a finite set $F\subseteq G$ such that $G_{\tau}\subseteq F*U_n$. \\
For let $\tau=(n_1,\cdots,n_k)$ as well as $n$ be given. Then 
\[
 G_{\tau}\subseteq \sigma(\Omega(U_{n_1}),\cdots,\Omega(U_{n_k}),\Omega(U_n))\in\Omega(U_n).
\]
Finally, enumerate the set of finite sequences of positive integers as $\tau_1,\, \tau_2,\, \cdots,\, \tau_n,\,\cdots$. Choose finite subsets $F_1,\, F_2,\, \cdots,\, F_n,\,\cdots$ of $G$ so that for each $k$ we have
\[
  G_{\tau_1}\cup\cdots\cup G_{\tau_k}\subseteq F_k\otimes U_k \in \Omega(U_k).
\]
The sequence $(F_k*U_k:k\in\naturals)$ witnesses $\sone(\omeganbd,\Gamma)$ for the given sequence of neighborhoods of the identity.
\end{proof}

The following corollary answers Problem \ref{problem:HRT}:
\begin{corollary}\label{cor:stroboundedproducts}
If $(G,\otimes)$ is a strictly o-bounded $\textsf{T}_0$ group, and $(H,\triangle)$ is a $\textsf{T}_0$ group with the property $\sfin(\onbd,\open)$, then $G\times H$ has the property $\sfin(\onbd,\open)$.
\end{corollary}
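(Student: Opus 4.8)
The plan is to derive Corollary~\ref{cor:stroboundedproducts} by chaining Theorem~\ref{thm:sobishb} with Theorem~\ref{thm:productgroups}, which is exactly the ``single unifying property'' strategy developed throughout this section. First I would observe that, since $(G,\otimes)$ is a strictly o-bounded $\textsf{T}_0$ group, player TWO has a winning strategy in the game $\gone(\omeganbd,\open)$ played on $G$; this is the definition of strict o-boundedness recorded just before Problem~\ref{problem:HRT}. Theorem~\ref{thm:sobishb} then applies verbatim and yields that $G$ has the property $\sone(\omeganbd,\Gamma)$. This is the key structural fact about $G$ that feeds the product theorem.

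Next I would handle the factor $(H,\triangle)$. By hypothesis $H$ has the property $\sfin(\onbd,\open)$, and by Theorem~3 of~\cite{BKS} (quoted at the start of this section) the three properties $\sone(\omeganbd,\open)$, $\sfin(\onbd,\open)$ and $\sfin(\omeganbd,\open)$ are equivalent for topological groups; hence $H$ satisfies $\sone(\omeganbd,\open)$. Now take $\mathcal{A} = \open$ in Theorem~\ref{thm:productgroups}: since $G$ satisfies $\sone(\omeganbd,\Gamma)$ and $H$ satisfies $\sone(\omeganbd,\open)$, the product group $G\times H$ satisfies $\sone(\omeganbd,\open)$. One final invocation of the $\cite{BKS}$ equivalence converts this back to $\sfin(\onbd,\open)$ for $G\times H$, which is exactly the conclusion sought.

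One point to check carefully is that $G\times H$ is again a $\textsf{T}_0$ topological group, so that Theorem~\ref{thm:productgroups} is legitimately applicable and the $\cite{BKS}$ equivalence is available for it: this is immediate, since a finite product of $\textsf{T}_0$ topological groups is a $\textsf{T}_0$ topological group (by Theorem~\ref{thm:T0} the factors are in fact $\textsf{T}_{3\frac12}$, and the product of $\textsf{T}_0$ groups is $\textsf{T}_0$). A second routine point is that $\omeganbd$ for the product, as used in Theorem~\ref{thm:productgroups}, is the family of covers of $G\times H$ by sets of the form $F\otimes(U\times V)$ with $F\subseteq G\times H$ finite and $U,V$ neighborhoods of the identities; this is precisely the setting in which that theorem is stated, so nothing extra is needed.

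The main obstacle is not in this deduction at all---it is entirely bookkeeping once Theorems~\ref{thm:sobishb} and~\ref{thm:productgroups} are in hand---but rather it has already been absorbed into the proof of Theorem~\ref{thm:sobishb}: the crux is that a winning strategy $\sigma$ for TWO in $\gone(\omeganbd,\open)$, applied along all finite sequences of indices drawn from a fixed sequence of identity neighborhoods witnessing non-total-boundedness, produces via the sets $G_\tau$ a countable family of ``small'' sets covering $G$ (Claims 1 and 2 there), which can be amalgamated into a $\gamma$-cover by finite unions. Thus the only thing to emphasize in the corollary's proof is that strict o-boundedness of $G$ is used solely to invoke Theorem~\ref{thm:sobishb}, upgrading $G$ from an $\sfin(\onbd,\open)$ group to an $\sone(\omeganbd,\Gamma)$ group, which is the extra hypothesis on one factor that Example~2.12 of~\cite{HRT} shows to be necessary in some form.
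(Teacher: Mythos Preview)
Your argument is correct and follows the same route as the paper: apply Theorem~\ref{thm:sobishb} to upgrade $G$ to $\sone(\omeganbd,\Gamma)$, then invoke Theorem~\ref{thm:productgroups} with $\mathcal{A}=\open$ to conclude $\sone(\omeganbd,\open)$ for $G\times H$. Your version merely spells out the two uses of the \cite{BKS} equivalence between $\sfin(\onbd,\open)$ and $\sone(\omeganbd,\open)$ that the paper leaves implicit.
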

\begin{proof}
Let $(G,\otimes)$ and $(H,\triangle)$ be as in the hypotheses. By Theorem \ref{thm:sobishb} the group $(G,\otimes)$ has the property $\sone(\omeganbd,\Gamma)$. Then by Theorem \ref{thm:productgroups}, $G\times H$ has the property $\sone(\omeganbd,\open)$.
\end{proof}

\section{The cardinality of $T_0$ groups with the property $\sone(\omeganbd,\Gamma)$.} \label{sec:cardinality}

Next we briefly consider the cardinality of topological groups satisfying the property $\sone(\omeganbd,\Gamma)$. It is useful to first catalogue a few basic behaviors of the property $\sone(\omeganbd,\Gamma)$ under some standard forcing notions. Although one can prove that in general any forcing iteration of length of uncountable cofinality which cofinally often adds a dominating real converts any ground model $\aleph_0$-bounded group into a group satisfying $\sone(\omeganbd,\Gamma)$, we prove it here for a specific partially ordered set: 

\begin{theorem}\label{thm:hechleriteration} Let $\kappa$ be a cardinal number of uncountable cofinality. Let $(\poset, <)$ be the finite support iteration by $\kappa$ Hechler reals. If $(G,\otimes)$ is $\aleph_0$-bounded in the ground model, then 
\[
  {\mathbf 1}_{\poset}\forces ``\check{G} \mbox{ has the property }\sone(\omeganbd,\Gamma)".
\]  
\end{theorem}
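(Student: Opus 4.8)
The plan is to work in the generic extension by the full finite support iteration $\poset$ of length $\kappa$, and exploit the fact that this iteration cofinally often adds Hechler (hence dominating) reals, together with the ccc-ness (and finite support structure) of $\poset$ to reflect names for $\omeganbd$-covers into intermediate models. First I would fix, in $V^{\poset}$, a sequence $(\mathcal{U}_n : n \in \naturals)$ of $\omeganbd$-covers of $G$, and for each $n$ a neighborhood $U_n$ of the identity with $\mathcal{U}_n = \Omega(U_n) = \{F \otimes U_n : F \subset G \text{ finite}\}$. Since $\poset$ is ccc and $\mathrm{cf}(\kappa) > \omega$, there is some $\alpha < \kappa$ such that the sequence $(U_n : n \in \naturals)$ — equivalently a full set of names/antichains for it — lies in $V^{\poset_\alpha}$, where $\poset_\alpha$ is the initial segment of the iteration of length $\alpha$. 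By Theorem \ref{boundedpreserve}, $G$ remains $\aleph_0$-bounded in $V^{\poset_\alpha}$, so in that model each $\Omega(U_n)$ has a countable subcover; enumerate countably many finite sets $\{F^n_k : k \in \naturals\}$ with $\bigcup_k F^n_k \otimes U_n = G$.

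Next I would use the Hechler real added at stage $\alpha$ (or any stage $\ge \alpha$ and $< \kappa$, which exists since $\mathrm{cf}(\kappa)>\omega$ so there are cofinally many such stages, in particular one immediately after $\alpha$). Working in $V^{\poset_{\alpha+1}}$, let $d \in \oo$ be the Hechler real, which dominates every ground-model-of-$V^{\poset_\alpha}$ real, in particular every function in $(V^{\poset_\alpha} \cap \oo)$. For each $n$ define the finite set
\[
  G_n = \bigcup\{ F^j_k : j \le n,\ k \le d(n) \} \subseteq G,
\]
and set $S_n = G_n \otimes U_n \in \Omega(U_n) = \mathcal{U}_n$. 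The claim is that $(S_n : n \in \naturals)$ is a $\gamma$-cover of $G$, witnessing $\sone(\omeganbd,\Gamma)$ for the given sequence. To see this, fix $x \in G$. For each $j$, $x \in F^j_{k_j} \otimes U_j$ for some $k_j$; the function $j \mapsto k_j$ lies in $V^{\poset_\alpha}$, hence is dominated by $d$: there is $N$ with $k_j \le d(n)$ for all $j \le n$ whenever $n \ge N$ and also $n \ge j$. Thus for all sufficiently large $n$ (namely $n \ge \max(N, \text{the relevant } j)$) we have $x \in F^j_{k_j} \otimes U_j \subseteq G_n \otimes U_n = S_n$, so $x$ is in all but finitely many $S_n$. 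Since $\Gamma$-covers are required to be infinite, I would also note the $S_n$ are infinitely many distinct sets (or pass to a subsequence / observe $G$ is infinite in the nontrivial case, as in Lemma \ref{lemma:Th5.3}); if $G$ is finite the statement is trivial.

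Finally I would pull this back up to $V^{\poset}$: the sequence $(S_n : n \in \naturals)$, constructed in $V^{\poset_{\alpha+1}} \subseteq V^{\poset}$, still has the property of being a $\gamma$-cover of $G$ there, because being a $\gamma$-cover of a fixed set by a fixed countable family of sets is absolute between transitive models containing the relevant objects (it is a statement about the sets $G$, $U_n$, the finite sets $F^j_k$, and $d$, all of which are in $V^{\poset_{\alpha+1}}$, and the property ``all but finitely many'' is $\Sigma^0_2$ and hence preserved). Hence in $V^{\poset}$ the arbitrary sequence $(\mathcal{U}_n)$ of $\omeganbd$-covers has been reduced by a single selection each to a $\gamma$-cover, which is exactly $\sone(\omeganbd,\Gamma)$. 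The main obstacle I anticipate is the reflection/naming step: making precise that a $\poset$-name for the countable sequence $(U_n : n \in \naturals)$ of neighborhoods of $id$ can be taken to be a $\poset_\alpha$-name for some $\alpha < \kappa$. This uses that $\poset$ is ccc — so each name for a real (or for a countable object over the ground model) involves only countably many conditions, each with finite support, hence bounded support below some $\alpha < \mathrm{cf}(\kappa) \le \kappa$ — and that the finite support iteration factors as $\poset_\alpha * (\text{tail})$. I would cite the standard finite-support-iteration factorization and ccc-name-reflection lemmas (e.g. from \cite{Kunen} or \cite{Baumgartner}) rather than reprove them, and the only real care needed is that $U_n$ need not be a real but is named by an antichain-indexed family of ground-model neighborhoods, which is still a countable-over-$V$ object and reflects the same way.
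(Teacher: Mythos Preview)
Your approach is essentially identical to the paper's: reflect the countable sequence of neighborhoods to an intermediate model $V^{\poset_\alpha}$ using ccc, finite supports, and $\mathrm{cf}(\kappa)>\omega$; use $\aleph_0$-boundedness there (via Theorem~\ref{boundedpreserve}) to extract countable covers; then let the next Hechler real dominate the associated coding functions to produce the $\gamma$-cover. The paper does exactly this, encoding with points $x^n_m$ and functions $f_x(n)=\min\{m:x\in\{x^n_1,\dots,x^n_m\}\otimes U_n\}$ rather than your finite sets $F^n_k$, but that is cosmetic.

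One small slip in your verification: the inclusion $F^j_{k_j}\otimes U_j \subseteq G_n\otimes U_n$ you invoke is false for $j\neq n$, since $U_j\not\subseteq U_n$ in general. The fix is immediate --- take $j=n$: for $n\ge N$ you have $k_n\le d(n)$, hence $F^n_{k_n}\subseteq G_n$ and $x\in F^n_{k_n}\otimes U_n\subseteq G_n\otimes U_n=S_n$. With this correction the union over $j\le n$ in your definition of $G_n$ becomes unnecessary, and your argument collapses to exactly the paper's.
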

{\flushleft{\bf Proof:}} By Theorem \ref{boundedpreserve} ${\mathbf 1}_{\poset}\forces ``\check{(G,\otimes)} \mbox{ is $\aleph_0$ bounded}"$. Thus, as $\poset$ has the countable chain condition, if we take a $\poset$-name $(\dot{\mathcal{U}}_n:n<\omega)$ for a sequence of $\onbd$ members we may assume that this sequence is present in the ground model, since it is a name in an initial segment of the iteration, and we can factor the iteration at this initial segment. Since in this initial segment $(G,\otimes)$ is $\aleph_0$-bounded we may choose for each $n$ a countable subset $X_n$ of $G$ such that $G = \bigcup_{n<\omega}X_n\otimes U_n$, where $\mathcal{U}_n = \open(U_n)$. Define for each $x\in G$ a function $f_x$ from $\omega$ to $\omega$ as follows: Enumerate $X_n$ as $(x^n_m:m<\omega)$. Then 
\[
  f_x(n) = \min\{m:x\in \{x^n_1,\cdots,x^n_m\}\otimes U_n\}.
\]
The family $\{f_x:x\in G\}$ is in an initial segment of the iteration,  
and so the next Hechler real added eventually dominates each $f_x$. Let $g$ be the next Hechler real. Then $\{x^n_j:j\le g(n)\}\otimes U_n \subseteq \mathcal{U}_n$ is a finite subset of $\mathcal{U}_n$, and for each $x$, for all but finitely many $n$, $x\in \{x^n_j:j\le g(n)\} \otimes U_n$. It follows that the group $(G,\otimes)$ has the property $\sone(\omeganbd,\Gamma)$.
$\Box$

Incidentally, the Hechler reals partially ordered set does not preserve the Lindel\"of property. In Remark 5 of \cite{IG} Gorelic points out that the points ${\sf G}_{\delta}$ Lindel\"of subspace in this model fails to be Lindel\"of in the generic extension that forces MA plus not-CH. Indeed, this can be accomplished by a finite support iteration of $\omega_2$ or more Hechler reals over a model of CH. Readers could consult the original paper by Hechler \cite{Hechler}, or for example \cite{Palumbo} on Hechler real generic extensions 

\begin{theorem}\label{thm:mathiasiteration} Let $(\poset, <)$ be the countable support iteration by $\aleph_2$ Mathias reals over a model of CH. If $(G,\otimes)$ is $\aleph_0$-bounded in the ground model, then 
\[
   {\mathbf 1}_{\poset}\forces ``\check{(G,\otimes)} \mbox{ has the property } \sone(\omeganbd,\Gamma)".
\]   
\end{theorem}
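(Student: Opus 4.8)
The plan is to mimic the Hechler-iteration argument of Theorem \ref{thm:hechleriteration}, but to cope with the fact that the Mathias iteration uses countable support and is only proper, not ccc. First I would invoke Theorem \ref{boundedpreserve} to know that ${\mathbf 1}_{\poset}\forces``\check G$ is $\aleph_0$-bounded'', so that a $\poset$-name $(\dot{\mathcal U}_n:n<\omega)$ for a sequence of $\onbd$-covers is forced to be a sequence of $\onbd$-covers of $\check G$; after passing to a name for the corresponding sequence of identity-neighborhoods $(\dot U_n:n<\omega)$ it suffices to produce, in the extension, finite sets $F_n\subseteq G$ with $\bigcup_n(F_n\otimes U_n)$ a $\gamma$-cover of $\check G$.

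The key structural input is that a countable support iteration of length $\aleph_2$ of proper forcings over a model of CH is itself proper and has the $\aleph_2$-cc, and that the Mathias iteration has the Laver property / adds dominating reals cofinally (indeed the Mathias real dominates the ground model of each iterand, and moreover is an $\omega^\omega$-dominating real over each intermediate model). So the second step is a reflection step: because $\poset$ is proper and $|G|$-sized names are involved, I would argue that the name $(\dot U_n:n<\omega)$, together with enough of the combinatorics of $G$, is captured at some intermediate stage $\poset_\alpha$ with $\alpha<\aleph_2$ of uncountable cofinality — here one uses properness to get that the countably many names $\dot U_n$ are (essentially) $\poset_\alpha$-names, the factorization $\poset\cong\poset_\alpha * \poset^{tail}$, and the fact that $\poset_\alpha$ itself makes $\check G$ $\aleph_0$-bounded. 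Working in $V^{\poset_\alpha}$, since $G$ is $\aleph_0$-bounded there I would pick for each $n$ a countable $X_n=\{x^n_m:m<\omega\}\subseteq G$ with $G=\bigcup_n X_n\otimes U_n$, and define $f_x(n)=\min\{m: x\in\{x^n_1,\dots,x^n_m\}\otimes U_n\}$ for $x\in G$.

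The third step is to dominate: the Mathias real added at stage $\alpha$ (or the next one whose iterand sees $\{f_x:x\in G\}$) is dominating over $V^{\poset_\alpha}$, so there is $g\in\omega^\omega$ in $V^{\poset_{\alpha+1}}$ with $f_x\le^* g$ for every $x\in G$; then $F_n:=\{x^n_j:j\le g(n)\}$ is finite, $F_n\otimes U_n\in\Omega(U_n)$, and every $x\in G$ lies in $F_n\otimes U_n$ for all but finitely many $n$, so $\{F_n\otimes U_n:n<\omega\}$ is a $\gamma$-cover. Finally I would note that nothing added after stage $\alpha+1$ can destroy this: being a $\gamma$-cover of $G$ is a property about the ground-model set $G$, the sets $F_n\otimes U_n$, and which points of $G$ lie in cofinitely many of them, all of which are determined in $V^{\poset_{\alpha+1}}$ — and since $G$ acquires no new points (properness, Theorem \ref{nonboundedpreserve}'s argument), the $\gamma$-cover computed at stage $\alpha+1$ is still a $\gamma$-cover of $\check G$ in the full extension.

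The main obstacle I expect is the reflection/factorization step: unlike the ccc case, a countable support iteration does not let one freely assume a name for a countable object lives in a proper initial segment, so one must argue carefully — either via a chain-condition argument ($\aleph_2$-cc together with the cofinality-$\aleph_2$ of the iteration length forces the $\omega$ many hereditarily-small names to be bounded below some $\alpha<\aleph_2$), or by a Löwenheim–Skolem/elementary-submodel argument capturing $G$, the names $\dot U_n$, and a suitable amount of $H(\theta)$ inside a structure of size $\le\aleph_1$ whose intersection with $\aleph_2$ has uncountable cofinality, and then checking that the iteration restricted to that ordinal is a complete suboforcing. Getting the bookkeeping of "$G$ is $\aleph_0$-bounded at stage $\alpha$ so the $X_n$ exist there" to line up with "the dominating real appears one step later" is the place where the proof needs the most care; everything after the appearance of $g$ is the same routine verification as in Theorem \ref{thm:hechleriteration}.
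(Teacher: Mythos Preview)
Your proposal is correct and follows essentially the same approach as the paper. The paper handles the reflection step exactly via your first option --- the $\aleph_2$-cc of the countable support iteration under CH guarantees the name for the sequence $(\dot U_n:n<\omega)$ lives in some proper initial segment $\poset_\alpha$, after which the argument proceeds word-for-word as you describe (choose $X_n$ by $\aleph_0$-boundedness at stage $\alpha$, define $f_x$, dominate with the next Mathias real). Your final preservation paragraph is harmless but unnecessary: once the finite sets $F_n\subseteq G$ and the neighborhoods $U_n$ are fixed, the statement ``$\{F_n\otimes U_n:n<\omega\}$ is a $\gamma$-cover of $G$'' is absolute upward since $G$, the $F_n$, and the $U_n$ are all ground-model objects and no new points of $G$ appear.
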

\begin{proof} By Theorem \ref{boundedpreserve} ${\mathbf 1}_{\poset}\forces ``\check{G} \mbox{ is $\aleph_0$ bounded}"$. Thus, as CH holds and antichains of the poset $(\mathbb{P},<)$ have cardinality at most $\aleph_1$, for any $\poset$-name $(\dot{U}_n:n<\omega)$ for a sequence of neighborhoods of the identity, we may assume that this sequence of neighborhoods of the identity is present in the ground model (the name of the sequence is a name in an initial segment of the iteration), and factor the iteration over this initial segment. Since by Theorem \ref{boundedpreserve} $(G,\otimes)$ is $\aleph_0$-bounded in this initial segment choose (in the generic extension by this initial segment) for each $n$ a countable subset $X_n$ of $G$ such that $G = \bigcup_{n<\omega}X_n\otimes U_n$, where $\mathcal{U}_n = \open(U_n)$. Define for each $x\in G$ a function $f_x$ from $\omega$ to $\omega$ as follows: Enumerate $X_n$ as $(x^n_m:m<\omega)$. Then 
\[
  f_x(n) = \min\{m:x\in \{x^n_1,\cdots,x^n_m\}\otimes U_n\}.
\]
The family $\{f_x:x\in G\}$ is in the generic extension by the initial segment (the ``ground model" fooro the remaining generic extension), and so the next Mathias real added by the generic extension eventually dominates each $f_x$. Let $g$ be such a dominating real. Then $\{x^n_j:j\le g(n)\}\otimes U_n \subseteq \mathcal{U}_n$ is a finite subset of $\mathcal{U}_n$, and for each $x$, for all but finitely many $n$, $x\in \{x^n_j:j\le g(n)\} \otimes U_n$. It follows that in the generic extension $(G,\otimes)$ has the property $\sone(\omeganbd,\Gamma)$.
\end{proof}

As a consequence we obtain 
\begin{theorem}\label{thm:Hurewiczcards}
It is consistent, relative to the consistency of \textsf{ZFC}, that there is for each cardinal number $\kappa$ a group with property $ \sone(\omeganbd,\Gamma)$. 
\end{theorem}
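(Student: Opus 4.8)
The plan is to combine the cardinality statement of Lemma~\ref{lemma:cardinality} with the forcing statement of Theorem~\ref{thm:hechleriteration}. I would work in an arbitrary ground model $V$ of \textsf{ZFC} and let $(\poset,<)$ be the finite support iteration of $\aleph_1$ Hechler reals; since $\aleph_1$ has uncountable cofinality, Theorem~\ref{thm:hechleriteration} applies and tells us that for \emph{every} group $(G,\otimes)$ that is $\aleph_0$-bounded in $V$ we have ${\mathbf 1}_{\poset}\forces$``$\check G$ has the property $\sone(\omeganbd,\Gamma)$''. The claim will then be that $V^{\poset}$ is the model we want. Note that the length of the iteration does not need to be tied to the sizes of the groups involved: $\aleph_1$ suffices because the theorem is uniform in the ground-model group.

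Next I would check that cardinals are preserved. As a finite support iteration of ccc posets, $\poset$ is ccc, hence preserves all cardinals and cofinalities; in particular the cardinals of $V^{\poset}$ are exactly the cardinals of $V$, and any set has the same cardinality in $V$ and in $V^{\poset}$. Now fix in $V^{\poset}$ an arbitrary infinite cardinal $\kappa$; it is also a cardinal of $V$, so Lemma~\ref{lemma:cardinality}, applied in $V$, yields an $\aleph_0$-bounded group $G_{\kappa}\in V$ with $|G_{\kappa}| = \kappa$. By cardinal preservation $|G_{\kappa}| = \kappa$ still holds in $V^{\poset}$, and by Theorem~\ref{thm:hechleriteration} the group $G_{\kappa}$ has the property $\sone(\omeganbd,\Gamma)$ in $V^{\poset}$. (For finite $\kappa$ a finite discrete group is trivially an example.) Thus in $V^{\poset}$ there is, for every cardinal $\kappa$, a group with the property $\sone(\omeganbd,\Gamma)$, and since $V^{\poset}$ is a forcing extension of an arbitrary model of \textsf{ZFC}, this is consistent relative to the consistency of \textsf{ZFC}.

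The only real obstacle is that a \emph{single} extension must handle \emph{all} $\kappa$ at once, and this is precisely why it matters that Theorem~\ref{thm:hechleriteration} is phrased for every ground-model $\aleph_0$-bounded group rather than a fixed one: for each $G_{\kappa}$ the $\poset$-name for a given sequence of $\onbd$-covers lives in a countable-length initial segment of the iteration (by the ccc together with $\mathrm{cf}(\aleph_1)>\omega$), so there is always a later Hechler real dominating the associated family $\{f_x:x\in G_{\kappa}\}$, uniformly in $\kappa$. I would also remark that an alternative route gives the same conclusion: run the argument over a model of \textsf{CH} using the countable support iteration of $\aleph_2$ Mathias reals and invoke Theorem~\ref{thm:mathiasiteration} instead; that iteration is proper and $\aleph_2$-c.c.\ under \textsf{CH}, hence again preserves all cardinals, so the cardinality bookkeeping is unchanged.
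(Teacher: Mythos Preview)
Your argument is correct and follows essentially the same route as the paper: invoke Lemma~\ref{lemma:cardinality} for the existence of $\aleph_0$-bounded groups of every infinite cardinality, apply Theorem~\ref{thm:hechleriteration} (or, as you also note, Theorem~\ref{thm:mathiasiteration}) to upgrade each such ground-model group to one with property $\sone(\omeganbd,\Gamma)$ in the extension, and use cardinal preservation to conclude. Your write-up is more explicit about the ccc, about why a single extension suffices for all~$\kappa$, and about the finite case, but the strategy is identical.
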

\begin{proof}
By Lemma \ref{lemma:cardinality} there exists for each infinite cardinal number $\kappa$ an $\aleph_0$-bounded group of cardinality $\kappa$. By either of Theorem \ref{thm:hechleriteration} or Theorem \ref{thm:mathiasiteration}, in the corresponding generic extension each ground model $\aleph_0$-bounded group has property $\sone(\omeganbd,\Gamma)$. Since the forcing partially ordered set in either case preserves cardinal numbers, the result follows.
\end{proof}

To round off the consideration of the property $\sone(\omeganbd,\Gamma)$ under forcing it is worth recording for the record that 
\begin{theorem}\label{thm:Hurewiczcccpreserve} If the group  $(G,\otimes)$ has the property $\sone(\omeganbd,\Gamma)$ and if $(\poset,<)$ is a partially ordered set with the countable chain condition,, then 
\[
  {\mathbf 1}_{\poset}\forces ``(\check{G},\check{\otimes}) \mbox{ has the property }\sone(\omeganbd,\Gamma)".
\]  
\end{theorem}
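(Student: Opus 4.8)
The plan is to show that the property $\sone(\omeganbd,\Gamma)$ is not destroyed by a ccc forcing by reducing, exactly as in the proofs of Theorems \ref{thm:hechleriteration} and \ref{thm:mathiasiteration}, to a statement about dominating a ground-model family of functions $\{f_x : x \in G\}$ in $\oo$ — except that here we do not \emph{add} a dominating real, we instead \emph{use} the witness to $\sone(\omeganbd,\Gamma)$ already present in the ground model, which after the ccc extension still dominates the relevant countable family because the family has not grown. First I would take a $\poset$-name $(\dot{\mathcal{U}}_n : n < \omega)$ for a sequence of $\omeganbd$ covers of $\check G$. By ccc, a name for a sequence of neighborhoods of the identity (equivalently, for the sequence $(\dot U_n : n<\omega)$ with $\dot{\mathcal U}_n = \Omega(\dot U_n)$) is decided by a countable set of conditions, so one can reflect this into the ground model: more precisely, fix $G$-generic $H$ over $\poset$ and work in $V[H]$; since each $\dot U_n$ is a name, each interpreted neighborhood $U_n \in V[H]$ need not be in $V$, but it suffices to produce, inside $V[H]$, for each $n$ a finite $S_n \subseteq G$ with $G \subseteq \bigcup_{n} S_n \otimes U_n$ forming a $\gamma$-cover. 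Since $V$ still satisfies (by Theorem \ref{boundedpreserve}, or just because $G$ was $\aleph_0$-bounded and ccc preserves this) that $\check G$ is $\aleph_0$-bounded in $V[H]$, choose in $V[H]$ countable $X_n = (x^n_m : m<\omega) \subseteq G$ with $G = \bigcup_m \{x^n_1,\dots,x^n_m\}\otimes U_n$ for each $n$.

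The key step is the dominating argument. For each $x \in G$ define $f_x \in \oo$ by $f_x(n) = \min\{m : x \in \{x^n_1,\dots,x^n_m\}\otimes U_n\}$; this is well-defined by the choice of $X_n$. The family $\{f_x : x \in G\}$ lives in $V[H]$ — note $G \in V$ but the $f_x$ need not be in $V$, since they depend on the $U_n$ and $X_n$ — so I cannot directly invoke that it is dominated by a ground-model real. Instead I would run the whole reduction \emph{inside the ground model} against a name: that is, fix a condition $p$ and a name, and use that ccc lets us bound, in $V$, for each $n$ a countable set of possible values, then handle the possibilities by a fusion-free book-keeping. The cleaner route, and the one I expect the paper to take, is: in $V$, since $\poset$ is ccc, the name $(\dot U_n : n<\omega)$ has only countably many ``possible interpretations'' modulo a maximal antichain $A_n$ below any given condition; enumerate the countably many candidate neighborhoods $W_{n,k}$ ($k<\omega$) and candidate countable enumerations, and in $V$ apply $\sone(\omeganbd,\Gamma)$ to the diagonal sequence of the covers $\Omega(W_{n,k})$ to get, in $V$, a single function $g \in \oo$ that simultaneously dominates all the $f_x$ corresponding to all candidate interpretations. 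Then $g \in V$ works in $V[H]$: whichever $W_{n,k}$ got chosen as $U_n$ by the generic, $g$ dominates the actual $f_x$, so $\{x^n_1,\dots,x^n_{g(n)}\}\otimes U_n$ is a finite subset of $\dot{\mathcal U}_n$ and $(\{x^n_1,\dots,x^n_{g(n)}\}\otimes U_n : n<\omega)$ is a $\gamma$-cover of $G$ in $V[H]$.

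Spelling out why $\sone(\omeganbd,\Gamma)$ in $V$ yields such a dominating $g$: this is the standard equivalence (used implicitly throughout the section) that a group with $\sone(\omeganbd,\Gamma)$ is in particular \emph{Hurewicz-bounded}, so the family of functions arising from countable coverings by translates of shrinking neighborhoods is dominated; concretely, for the sequence of $\omeganbd$-covers $\Omega(W_{n,k})$ indexed by pairs $(n,k)$, $\sone(\omeganbd,\Gamma)$ gives finite $F_{n,k}$ with $(F_{n,k}\otimes W_{n,k})$ a $\gamma$-cover, and reading off the indices of $F_{n,k}$ in the fixed enumeration of $X_{n,k}$ produces the required $g$ after diagonalizing over $k$. \textbf{The main obstacle} is precisely that the relevant countable objects ($U_n$, $X_n$, hence $f_x$) need not lie in the ground model, so one cannot apply $\sone(\omeganbd,\Gamma)$ to them directly; overcoming this needs the ccc-driven reflection of the name $(\dot U_n)$ into a countable family of ground-model candidates, and the observation that $G$ itself — the index set of the family $\{f_x\}$ — is in $V$, so a single $g\in V$ can be arranged to dominate all of them at once. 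Everything else (the identity $G = \bigcup_m \{x^n_1,\dots,x^n_m\}\otimes U_n$, the verification that the resulting cover is a $\gamma$-cover, and that $\aleph_0$-boundedness is preserved) is routine and already established in the excerpt.
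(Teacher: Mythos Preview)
Your core strategy --- use ccc to replace each $\dot U_n$ by a countable ground-model family of neighborhoods $W_{n,k}$ (via a maximal antichain $(q^n_k)_k$ with $q^n_k\forces \check W_{n,k}\subseteq\dot U_n$), then invoke $\sone(\omeganbd,\Gamma)$ in $V$ on these ground-model covers --- is exactly the paper's approach, and your identification of the main obstacle (the $U_n$ need not lie in $V$) is spot on. Once one has finite $F_{n,k}\subset G$ in $V$ with $(F_{n,k}\otimes W_{n,k})_{(n,k)}$ a $\gamma$-cover, the name $\dot S_n=\{(\check F_{n,k},q^n_k):k<\omega\}$ already works: for each $x\in G$ only finitely many pairs $(n,k)$ fail, hence only finitely many $n$ fail along the generic choice $k=k_n$.

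Where you diverge from the paper is in grafting the $f_x$/dominating-real machinery from Theorems~\ref{thm:hechleriteration} and~\ref{thm:mathiasiteration} onto this argument. That detour is unnecessary here and introduces a small gap: the finite sets $F_{n,k}$ produced by $\sone(\omeganbd,\Gamma)$ are arbitrary finite subsets of $G$, not initial segments of your enumerations $X_{n,k}$, so ``reading off the indices of $F_{n,k}$ in the fixed enumeration of $X_{n,k}$'' to manufacture a dominating $g$ does not go through as stated. The paper sidesteps all of this: it never introduces $X_n$, $f_x$, or $g$. Instead it linearizes the doubly-indexed family by setting $N_n=\bigcap_{k,\ell\le n}U^k_\ell$ (a single ground-model sequence of neighborhoods), applies $\sone(\omeganbd,\Gamma)$ once to obtain a nested sequence $F_1\subseteq F_2\subseteq\cdots$ with $(F_j\otimes N_j)_j$ a $\gamma$-cover, and then defines $\dot F_n=\{(\check F_{n+m},q^n_m):m<\omega\}$. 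The shift by $m$ guarantees $N_{n+m}\subseteq U^n_m$ whenever $q^n_m$ is in the generic filter, so $F_{n+m}\otimes N_{n+m}\subseteq \dot F_n\otimes\dot V_n\subseteq\dot F_n\otimes\dot U_n$, and since $n+m_n\to\infty$ the $\gamma$-cover property transfers. Your doubly-indexed version and the paper's intersection-and-shift version are two bookkeeping devices for the same reflection; the paper's is shorter and avoids the dominating-real vocabulary entirely.
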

\begin{proof}
Let $(\dot{U}_n:n<\omega)$ be a $\poset$-name for a sequence of neighborhoods of the identity element of the group $(G,\otimes)$.
For each $n$ choose (in the ground model) a sequence $(U^n_m:m<\omega)$ of neighborhoods of the identity element of $(G,\otimes)$, and a maximal antichain $(q^n_m:m<\omega)$ of $\poset$ such that for each $n$ and $m$
$   q^n_m\forces ``\check{U}^n_m\subseteq \dot{U}_n".
$  
 Then 
$  \dot{V}_n = \{(\check{U}^n_m,q^n_m):m<\omega\}
$ 
is a $\poset$-name and
\[
  {\mathbf 1}_{\poset}\forces``\dot{V}_n\subseteq\dot{U}_n \mbox{ is a neighborhood of the identity element of } \check{G}"
\]

For each $n$ define
$   N_n = \bigcap_{k,\ell\le n}U^k_{\ell},
$ 
a (ground model) neighborhood of the identity element of $(G,\otimes)$.
Applying the property $\sone(\omeganbd,\Gamma)$, choose finite sets $F_1\subseteq F_2\subseteq\cdots$ such that for each $x\in G$, for all but finitely many $k$, $x$ is a member of $F_k\otimes N_k$. 
Then for each $n$ define the $\poset$-name $\dot{F}_n$ fooor a finite subset of $\check{G}$ by $\{(\check{F}_{n+m},q^n_m):m<\omega\}$.

{\flushleft{\bf Claim:}} ${\mathbf 1}_{\poset}\forces``(\forall x\in\check{G})(\forall^{\infty}_n)(x\in \dot{F}_n\otimes\dot{V}_n)"$.\\

For let $H$ be a $\poset$-generic filter. For each $n$ choose $m_n$ with $q^n_{m_n}\in H$. Then we have that for each $n$,
$   (\dot{F}_n)_{H} = F_{n+m_n}.
$]  
Consider any $x\in G$. Choose $k$ s0 large that for $n\ge k$ we have $n+m_n>k$ and $x\in F_{n+m_n}\otimes N_{n+m_n}$. Since
$   N_{n+m_n}\subseteq U^n_{m_n} = (\dot{V})_H
$ 
it follows that $x\in (\dot{F}_n\otimes\dot{V}_n)_H$.   
\end{proof}

\section{Finite powers of Groups with the property $\sfin(\onbd,\open)$}\label{sec:finpowers}

Consider a topological group $(G,\otimes)$ has the property that whenever $(H,\triangle)$ is a topological group with the property $\sfin(\onbd,\open)$, then the product group $G\times H$ also has the property $\sfin(\onbd,\open)$. Then necessarily the group $G\times G$ has the property $\sfin(\onbd,\open)$: Indeed, every finite power of the group $(G,\otimes)$ has the property $\sfin(\onbd,\open)$.

Recall Example 2.12 of \cite{HRT} which illustrates that the product of two groups, each with the property $\sfin(\onbd,\open)$, does not necessarily have the property $\sfin(\onbd,\open)$. This example in fact gives a group $(G,\otimes)$ which has property $\sfin(\onbd,\open)$ but $(G,\otimes)\times (G,\otimes)$ does not have property $\sfin(\onbd,\open)$ (and the group is even metrizable).  
 One might ask whether the phenomenon exhibited by this example - $(G,\otimes)$ has property $\sfin(\onbd,\open)$, but $(G,\otimes)\times(G,\otimes)$ does not  -  is the only obstruction to a topological group $(G,\otimes)$ having a property such as 
\begin{itemize}
\item[(A)]{the product of $(G,\otimes)$ with any group with property $\sfin(\onbd,\open)$ has the property $\sfin(\onbd,\open)$}
\item[(B)]{each finite power of $(G,\otimes)$ has property $\sfin(\onbd,\open)$.}
\end{itemize} 
The following two prior results shed significant light on version (B) of this question:
\begin{theorem}[Banakh and Zdomskyy, Mildenberger and Shelah]\label{thm:finpowersprod} The following statement is consistent, relative to the consistency of \textsf{ZFC}:
{\flushleft{For}} each $\textsf{T}_0$ group $(G,\otimes)$, if $(G,\otimes)\times(G,\otimes)$ has the property $\sfin(\onbd,\open)$, then the group $(G,\otimes)$ in fact has the property $\sfin(\omeganbd,\open^{wgp})$.
\end{theorem}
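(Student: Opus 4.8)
Since this is a consistency statement about \emph{all} $\textsf{T}_0$ groups, the plan is not to tailor a forcing to a single group but to isolate a combinatorial principle $\Phi$ on $\omega$ with two features: (a) $\Phi$ implies the displayed dichotomy for every $\textsf{T}_0$ group, and (b) $\Phi$ holds in a suitable forcing extension of a model of $\textsf{ZFC}$. For $\Phi$ I would take a coherence-of-semifilters principle, namely the Semifilter Trichotomy --- equivalently $\mathfrak{u}<\mathfrak{g}$ --- which asserts that every semifilter on $\omega$ becomes, after composition with a finite-to-one surjection, one of a short list of canonical semifilters (the Fr\'echet filter, an ultrafilter, or the semifilter of all infinite sets). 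Feature (b) is then exactly the consistency of $\mathfrak{u}<\mathfrak{g}$, a theorem of Blass and Shelah whose models are also analysed by Mildenberger and Shelah; the substance of the theorem is feature (a), which is the argument of Banakh and Zdomskyy and which I would reconstruct as follows.

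First I would install the dictionary between the group-covering properties and combinatorics on $\omega$. Let $(G,\otimes)$ be $\textsf{T}_0$ with $G\times G$ having $\sfin(\onbd,\open)$; then $G$ and $G\times G$ are $\aleph_0$-bounded. For a sequence $(U_n:n\in\naturals)$ of neighborhoods of $id$ in $G$, use $\aleph_0$-boundedness to fix countable $X_n=(x^n_m:m\in\naturals)$ with $G=\bigcup_n X_n\otimes U_n$, and attach to each $x\in G$ the function $f_x\in\oo$ given by $f_x(n)=\min\{m:x\in\{x^n_1,\dots,x^n_m\}\otimes U_n\}$ --- the device already used in the proofs of Theorems \ref{thm:hechleriteration} and \ref{thm:mathiasiteration}. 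Unwinding definitions, and using the equivalences in Theorem~3 of \cite{BKS} to move freely between $\onbd$- and $\omeganbd$-covers, one obtains:
\begin{itemize}
\item[(i)] $G$ has $\sfin(\omeganbd,\open^{wgp})$ iff for every such $(U_n)$ the family $\{f_x:x\in G\}$ is ``groupably dominated'': there are finite $P_n\subseteq\naturals$ and an interval partition $\naturals=\bigcup_k I_k$ such that every finite $E\subseteq G$ is, for some $k$, covered over $I_k$ block-wise --- a Hurewicz-type ($\mathfrak{b}$-flavoured) tameness of the semifilter generated by $\{f_x\}$;
\item[(ii)] $G\times G$ has $\sfin(\onbd,\open)$ iff for every pair $(U_n),(V_n)$ of such sequences in $G$, with induced families $\{f_x\}$ and $\{h_y\}$, the paired family $\{(f_x,h_y):x,y\in G\}\subseteq(\naturals^2)^{\naturals}$ is ``selectively dominated'': there are finite $Q_n\subseteq\naturals^2$ with $(\forall x,y)(\exists n)$ such that $(f_x(n),h_y(n))$ lies coordinatewise below a member of $Q_n$ (here basic neighborhoods of $id$ in $G\times G$ are products $U\times V$); note that the witnessing $n$ must serve both coordinates at once, which is exactly why $\sfin(\onbd,\open)$ is not productive (Example~2.12 of \cite{HRT}).
\end{itemize}

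The core lemma I would then establish is: under $\Phi$, if the family $\{f_x\}$ produced by some $(U_n)$ is \emph{not} groupably dominated, then the paired family $\{(f_x,f_y):x,y\in G\}$ arising from the pair $(U_n),(U_n)$ is \emph{not} selectively dominated. By (i) the hypothesis is the failure of $\sfin(\omeganbd,\open^{wgp})$ for $G$, and by (ii) the conclusion is the failure of $\sfin(\onbd,\open)$ for $G\times G$, which is the contrapositive of what we want. The lemma is where $\Phi$ does its work: one pushes the semifilter generated by $\{f_x\}$ forward along a finite-to-one map realizing the trichotomy; in the Fr\'echet case the semifilter is so small that groupable domination is forced, contrary to hypothesis, so one lands in an ultrafilter-coherent case, and there the rigidity of a near-ultrafilter, together with the partial directedness of $\{f_x\}$ inherited from $\otimes$ (reading $x\otimes x'$ through the neighborhood base does not blow up the associated functions uncontrollably), lets one out-run any candidate finite sets $Q_n$ in both coordinates simultaneously along a common fast-growing interval partition. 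I expect this synchronization step --- aligning the group-closure of $\{f_x\}$ with the finite-to-one maps supplied by $\Phi$, and checking that the constructed witnesses genuinely come from $\onbd$-covers of $G\times G$ and not merely from abstract families --- to be the main obstacle; it is the technical heart of the Banakh--Zdomskyy analysis of o-bounded groups via coherence of semifilters. Once the lemma is proved, the theorem is read off in any model of $\Phi$, whose existence is the cited Blass--Shelah / Mildenberger--Shelah consistency result.
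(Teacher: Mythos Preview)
The paper does not contain a proof of this theorem. It is stated as a ``prior result'' with attribution to Banakh--Zdomskyy and Mildenberger--Shelah, and the paper immediately moves on to discuss its consequences (in particular, combining it with Theorems 3, 6 and 7 of \cite{BKS} to conclude that, consistently, $\sfin(\onbd,\open)$ for $G\times G$ already forces $\sfin(\onbd,\open)$ for every finite power of $G$). So there is nothing in the paper to compare your argument against.

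That said, your outline is along the right lines as far as the literature is concerned: the Banakh--Zdomskyy contribution is precisely an analysis of o-boundedness via coherence of semifilters (this is the survey \cite{BZ} in the paper's bibliography), and the consistency input is indeed a model in which a semifilter trichotomy / $\mathfrak{u}<\mathfrak{g}$-type principle holds. What you have written, however, is a strategy rather than a proof. The translations you label (i) and (ii) need to be stated and verified precisely; in particular your formulation of (i) conflates the weakly-groupable conclusion with a Hurewicz-type domination statement, and the phrase ``partial directedness of $\{f_x\}$ inherited from $\otimes$'' is doing unexplained work --- the functions $f_x$ depend on the enumerations of the $X_n$ and there is no obvious algebraic monotonicity to exploit. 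The ``core lemma'' paragraph names the obstacle correctly but does not surmount it: the synchronization of the two coordinates under the trichotomy is exactly the content of the Banakh--Zdomskyy argument, and you have not reproduced it. If you intend this as a genuine proof rather than a pointer to \cite{BZ}, those steps must be filled in.
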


Regarding Theorem \ref{thm:finpowersprod}: Prior results (Theorems 3, 6 and 7 of \cite{BKS}) that show that every finite power of a topological group has property $\sfin(\onbd,\open)$ if, and only if, the group has the property $\sone(\omeganbd,\open^{wgp})$. Moreover,
\begin{lemma}\label{lemma:sfinsone} For a topological group $(G,\otimes)$ the following are equivalent:
\begin{enumerate}
\item{$(G,\otimes)$ has the property $\sfin(\omeganbd,\open^{wgp})$}
\item{$(G,\otimes)$ has the property $\sone(\omeganbd,\open^{wgp})$}
\end{enumerate}
\end{lemma}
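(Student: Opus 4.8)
The implication $(2)\Rightarrow(1)$ is immediate from the general monotonicity fact $\sone(\mathcal{A},\mathcal{B})\Rightarrow\sfin(\mathcal{A},\mathcal{B})$ recorded in the introduction, so all the content lies in $(1)\Rightarrow(2)$. The obvious first attempt — run $\sfin(\omeganbd,\open^{wgp})$ on a given sequence and then merge each finite selected subfamily into a single member of the relevant $\Omega(U_n)$, mimicking the argument for $\sfin(\omeganbd,\open)=\sone(\omeganbd,\open)$ — I expect to fail: the partition witnessing that $\bigcup_n\mathcal{G}_n$ is weakly groupable need not respect the assignment $n\mapsto\mathcal{G}_n$, and after merging there is in general no regrouping of the single sets witnessing weak groupability. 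So the plan is to route $(1)\Rightarrow(2)$ through the finite-power characterization quoted just above Lemma \ref{lemma:sfinsone}: by Theorems 3, 6 and 7 of \cite{BKS} it suffices to show that $\sfin(\omeganbd,\open^{wgp})$ forces every finite power $G^k$ to have the property $\sfin(\onbd,\open)$.

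To prove that, I would fix $k\geq 1$ and a sequence $(\mathcal{U}_n:n\in\naturals)$ of members of $\onbd$ for $G^k$. First reduce to the case where $\mathcal{U}_n$ is the cover of $G^k$ by the translates $(x_1\otimes V_n)\times\cdots\times(x_k\otimes V_n)$ of a cube $V_n\times\cdots\times V_n$, with $V_n$ a neighborhood of $id$ in $G$; this is harmless, since a basic neighborhood of the identity of $G^k$ contains such a cube and any $\sfin(\cdot,\open)$-selection for the finer cube-covers yields one for the original covers. Then put $W_n=\bigcap_{m\leq n}V_m$, a neighborhood of $id$ with $W_m\subseteq V_n$ whenever $m\geq n$, and apply $\sfin(\omeganbd,\open^{wgp})$ to the sequence $(\Omega(W_n):n\in\naturals)$ of $\omeganbd$-covers of $G$, obtaining finite $\mathcal{G}_n\subseteq\Omega(W_n)$ such that $\bigcup_n\mathcal{G}_n$ is weakly groupable, witnessed by a partition $\bigcup_n\mathcal{G}_n=\bigcup_j\mathcal{W}_j$ into finite pairwise disjoint blocks with each finite subset of $G$ contained in some $\bigcup\mathcal{W}_j$. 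For each $O\in\bigcup_n\mathcal{G}_n$ fix a finite $F_O\subseteq G$ and an $n(O)$ with $O\in\mathcal{G}_{n(O)}$ and $O=F_O\otimes W_{n(O)}$, and let $\kappa(O)$ be the block index with $O\in\mathcal{W}_{\kappa(O)}$. For each $n$ I then take
\[
 \mathcal{F}_n=\bigcup_{O\in\mathcal{G}_n}\bigl\{\,(x_1\otimes V_n)\times\cdots\times(x_k\otimes V_n)\;:\;x_1,\dots,x_k\in{\textstyle\bigcup}\{\,F_{O'}:O'\in\mathcal{W}_{\kappa(O)}\,\}\,\bigr\},
\]
a finite subfamily of $\mathcal{U}_n$, and check that $\bigcup_n\mathcal{F}_n$ covers $G^k$: given $(a_1,\dots,a_k)$, choose $j$ with $\{a_1,\dots,a_k\}\subseteq\bigcup\mathcal{W}_j$, choose $O_i\in\mathcal{W}_j$ with $a_i\in O_i$, set $n=\min_i n(O_i)$ and relabel so $n=n(O_1)$; then $W_{n(O_i)}\subseteq V_n$ gives $a_i\in O_i\subseteq F_{O_i}\otimes V_n$, so $a_i\in x_i\otimes V_n$ for some $x_i\in F_{O_i}\subseteq\bigcup\{F_{O'}:O'\in\mathcal{W}_j\}$, and since $O_1\in\mathcal{G}_n$ with $\kappa(O_1)=j$ the cube $(x_1\otimes V_n)\times\cdots\times(x_k\otimes V_n)$ lies in $\mathcal{F}_n$ and contains $(a_1,\dots,a_k)$. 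As $k$ was arbitrary, the cited characterization then delivers $\sone(\omeganbd,\open^{wgp})$.

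The main obstacle is conceptual: recognizing that the $\sfin\Rightarrow\sone$ passage cannot be run by the usual merging trick, and must be bought by climbing to finite powers, where the ``$\omega$-cover split into disjoint finite blocks'' structure of the one-dimensional selection is exactly what is needed to cover $k$-tuples. Inside the finite-power argument, the one genuinely delicate point is reconciling the neighborhood appearing in a cover of $G^k$ with the single neighborhood $V_n$ produced by the one-dimensional instance of $\sfin$; this is precisely the role of the decreasing neighborhoods $W_n=\bigcap_{m\leq n}V_m$ and of selecting the least-index block member $O_1$. The reduction to cubical neighborhoods, and the disposal of degenerate cases (a finite subfamily already covering, a neighborhood equal to $G$, an empty $\mathcal{G}_n$), are routine.
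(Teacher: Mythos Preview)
Your argument is correct. The reduction to cubical $\onbd$-covers of $G^k$, the passage to decreasing $W_n$, and the covering verification using the minimal-index block member $O_1$ are all sound, and the cited characterization from \cite{BKS} then closes the loop to $\sone(\omeganbd,\open^{wgp})$.

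The paper, however, does not go through finite powers. Its proof (which is in fact truncated mid-argument in the printed text) sets up a direct merge: it replaces the given neighborhoods $V_n$ by the decreasing $U_n=\bigcap_{j\le n}V_j$, applies $\sfin(\omeganbd,\open^{wgp})$ to the sequence $(\Omega(U_n))$, obtains finite $\mathcal{F}_n\subseteq\Omega(U_n)$ whose union is weakly groupable with witnessing partition $(\mathcal{G}_k)$, and then stops. The intended completion is the very step you dismissed: for each block $\mathcal{G}_k$ let $n_k$ be the least index with $\mathcal{G}_k\cap\mathcal{F}_{n_k}\neq\emptyset$ and let $H_k$ be the union of the finite sets $F$ occurring in $\mathcal{G}_k$; because the $U_n$ are decreasing one gets $\bigcup\mathcal{G}_k\subseteq H_k\otimes U_{n_k}\subseteq H_k\otimes V_{n_k}$. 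Since $\mathcal{F}_n$ is finite and the $\mathcal{G}_k$ are disjoint, only finitely many $k$ share a given $n_k$, so for each $n$ one may set $T_n=\bigl(\bigcup_{n_k=n}H_k\bigr)\otimes V_n\in\Omega(V_n)$. Every finite $S\subseteq G$ lies in some $\bigcup\mathcal{G}_k\subseteq T_{n_k}$, so $\{T_n:n\in\naturals\}$ is in fact an $\omega$-cover, hence certainly in $\open^{wgp}$.

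So your concern that ``the partition need not respect $n\mapsto\mathcal{G}_n$'' is exactly what the decreasing-neighborhood trick neutralizes: monotonicity of the $U_n$ lets one push every element of a block up to the block's minimal index and then amalgamate. Your route via finite powers is longer and leans on the machinery of \cite{BKS}, but it is self-checking and makes the equivalence with ``all finite powers are Menger-bounded'' explicit; the paper's route is shorter and elementary, and in fact yields the formally stronger conclusion $\sone(\omeganbd,\Omega)$ along the way.
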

\begin{proof}
We must show that $(1)$ implies $(2)$. Thus, let $(\mathcal{U}_n:n\in\mathbb{N})$ be a sequence of elements of $\omeganbd$, say for each $n$ the set $V_n$ is a neighborhood of the identity element of $G$ and $\mathcal{V}_n = \{F\otimes V_n:F\subset G \mbox{ finite}\}$.

Then for each $n$ define $U_n = \bigcap\{V_j:j\le n\}$, a neighborhood of the identity of $G$, and define $\mathcal{U}_n = \{F\otimes U_n:n\in \mathbb{N}\}$. As each $\mathcal{U}_n$ is an element of $\omeganbd$, apply $\sfin(\omeganbd,\open^{wgp})$ to the sequence $(\mathcal{U}_n:n\in\mathbb{N})$. For each $n$ choose a finite subset $\mathcal{F}_n$ of $\mathcal{U}_n$ such that $\mathcal{G} = \bigcup\{\mathcal{F}_n:n\in\mathbb{N}\}$ is a weakly groupable cover of $G$. 

Fix a partition $(\mathcal{G}_n:n\in\mathbb{N})$ of $\mathcal{G}$ into finite sets $\mathcal{G}_n$ such that there is for each finite subset $S$ of $G$ an $n$ with $S\subset \bigcup\mathcal{G}_n$.
\end{proof}

Thus, Theorem \ref{thm:finpowersprod} establishes the consistency of the statement that if a $\textsf{T}_0$ group $(G,\otimes)$ is such that $(G,\otimes) \times (G,\otimes)$ has the property $\sfin(\onbd,\open)$, then every finite power of $(G,\otimes)$ has the property $\sfin(\onbd,\open)$. It turns out that in fact this statement is independent of \textsf{ZFC}, since on the other hand:
\begin{theorem}[\cite{MST}, Theorem 11]\label{thm:kk+1}
It is consistent, relative to the consistency of \textsf{ZFC}, that there is for each positive integer $k$ a separable metrizable topological group $(G,\otimes)$ such that $G^k$ has the property $\sfin(\onbd,\open)$ while $G^{k+1}$ does not have the property $\sfin(\onbd,\open)$.
\end{theorem}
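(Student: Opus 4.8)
The plan is to transfer a known topological example about finite powers and the Menger-bounded property (equivalently, $\sfin(\mathcal{O},\mathcal{O})$-type behavior on sets of reals) into the category of topological groups, using a free-topological-group or $\reals^{\naturals}$-embedding construction. Concretely, I would start from a ZFC-consistent family $(X_k : k \ge 1)$ of separable metrizable spaces with the property that $X_k^{\,k}$ has the Menger property (or the corresponding selection principle $\sfin(\Omega,\open^{wgp})$ for sets of reals) while $X_k^{\,k+1}$ does not — such spaces are produced in the Mildenberger--Shelah / Babinkostova-type constructions in the literature, and indeed \cite{MST} is exactly the source for this phenomenon. The first step is to recall that for a separable metrizable space, one can associate a separable metrizable topological group — for instance the subgroup of $\reals^{\naturals}$ generated by a copy of $X_k$ under a suitable embedding, or more robustly the group $C_p$-style construction — in such a way that boundedness-type selection properties of the finite powers of the space correspond to the same properties of the finite powers of the group. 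The cleanest route is: embed $X_k$ as a subspace of $\{0,1\}^{\naturals}$ (or of $\reals$), take $G_k$ to be the subgroup of $(\integers_2)^{\naturals}$ (or of $\reals^{\naturals}$) generated by $X_k$; since $(\integers_2)^{\naturals}$ is a Polish group and $G_k$ is a countable union of finite-arity continuous images of powers of $X_k$, the group $G_k$ is separable metrizable, and $G_k^{\,m}$ has $\sfin(\onbd,\open)$ exactly when a corresponding cover property holds of the powers of $X_k$.

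Next I would make precise the two directions of that correspondence. For the \emph{positive} direction — $G_k^{\,k}$ has $\sfin(\onbd,\open)$ — I would use that $\sfin(\onbd,\open)$ (Menger boundedness) is preserved by taking subgroups (noted in the excerpt just after Problem \ref{problem:obdproducts}), is preserved by continuous homomorphic images, and is countably additive; since $G_k$ is generated in countably many stages from $X_k$ by the group operations of the ambient Polish group, $G_k^{\,k}$ is a countable union of continuous images of powers $X_k^{\,N}$, and each such power inherits Menger-ness from $X_k^{\,k}$ provided the original space was chosen so that \emph{all} finite powers up to the relevant bound are Menger — so the example $(X_k)$ must be taken with ``$X_k^{\,j}$ Menger for all $j \le k$,'' which is exactly how the $k$-versus-$(k{+}1)$ examples are built. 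For the \emph{negative} direction — $G_k^{\,k+1}$ fails $\sfin(\onbd,\open)$ — I would use that $X_k^{\,k+1}$ embeds as a (closed) subspace of $G_k^{\,k+1}$, and that failure of $\sfin(\onbd,\open)$ passes to (closed, or even arbitrary) subgroups only in the contrapositive sense; since in a metrizable group $\sfin(\onbd,\open)$ coincides with the Menger property of the underlying space (this is the metrizable reduction already used around Theorem \ref{thm:sobishb} and Corollary 8 of \cite{LB1}), and Menger-ness is inherited by closed subspaces, a closed non-Menger subspace of $G_k^{\,k+1}$ forces $G_k^{\,k+1}$ itself to be non-Menger, hence to fail $\sfin(\onbd,\open)$.

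Finally I would assemble the consistency statement: work in a model of ZFC in which the required combinatorial hypothesis (of the sort $\mathfrak{b} = \aleph_1$ together with the relevant Mildenberger--Shelah-style construction, or whatever hypothesis \cite{MST} uses) holds, build $X_k$ and then $G_k$ for each $k$ simultaneously, and observe that each $G_k$ is separable metrizable by construction. The statement then follows. I expect the \textbf{main obstacle} to be the positive direction: controlling $\sfin(\onbd,\open)$ of $G_k^{\,k}$, since a priori the group generated by $X_k$ can have powers that are continuous images of $X_k^{\,N}$ for arbitrarily large $N$, and Menger-ness of $X_k^{\,k}$ does not automatically give Menger-ness of $X_k^{\,N}$ for $N > k$. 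The resolution is to choose the ground example more carefully — one wants a space $X_k$ whose \emph{all} finite powers are Menger-bounded but which nonetheless, when amalgamated with an extra coordinate (the ``$+1$''), produces a non-Menger power; the device for this in the literature is to take $X_k$ to be a subgroup of a fixed Polish group already (a $\sigma$-bounded or filter-based group), so that ``powers of $X_k$'' are themselves among the $G_k^{\,m}$, and the whole delicate bookkeeping of which powers are Menger is inherited directly from the source construction rather than re-derived. With that choice the continuous-image and countable-union closure properties of $\sfin(\onbd,\open)$ do the rest, and the negative direction is the comparatively easy half.
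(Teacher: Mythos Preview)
The paper does not prove this theorem at all: it is quoted verbatim as Theorem 11 of \cite{MST} and used only as a contrast to Theorem \ref{thm:finpowersprod}. So there is no ``paper's own proof'' to compare your sketch against, and any assessment has to be against the actual argument in \cite{MST}.

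That said, your sketch contains a genuine error that would block the argument as written. You assert that in a metrizable group $\sfin(\onbd,\open)$ coincides with the Menger property $\sfin(\open,\open)$ of the underlying space, and you use this to run the negative direction (a closed non-Menger subspace of $G_k^{\,k+1}$ forces $G_k^{\,k+1}$ to fail $\sfin(\onbd,\open)$). This equivalence is false: Menger-boundedness is strictly weaker than Menger already for separable metrizable groups --- indeed, the whole point of the distinction between $\onbd$-covers and arbitrary open covers is that o-boundedness is a much softer condition, and there are non-Menger metrizable groups that are o-bounded. Consequently, exhibiting a closed non-Menger subspace $X_k^{\,k+1}$ inside $G_k^{\,k+1}$ does not by itself witness failure of $\sfin(\onbd,\open)$, and your negative half collapses. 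You partially sense the difficulty at the end when you retreat to ``take $X_k$ to already be a subgroup,'' but at that point the free-group scaffolding is doing no work and you are simply invoking the source construction.

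For context, the construction in \cite{MST} does not pass through auxiliary spaces $X_k$ and generated groups at all. It works directly inside the Cantor group $(\integers/2\integers)^{\omega}$, building subgroups by $\mathbb{F}_2$-linear combinations of carefully chosen generators coming from a combinatorial hypothesis (a consequence of $\textsf{cov}(\mathcal{M}) = \mathfrak{c}$, in particular of \textsf{CH}). The $k$-versus-$(k{+}1)$ separation is obtained by controlling, for each finite power, a specific family of continuous maps into $\omega^{\omega}$ that detects Menger-boundedness; the positive and negative directions are both handled combinatorially rather than via closure properties of the Menger class. If you want to reconstruct the result, that is the route to follow.
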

 
 Less is known about version (A) of the question above. Interestingly, for the subclass of metrizable groups which have the property $\sfin(\onbd,\open)$ in all finite powers, it is consistent that a product of finitely many groups in this subclass is still in this subclass. In fact, an equiconsistency criterion has been identified.
\begin{theorem}[He, Tsaban and Zang \cite{HTZ}, Theorem 2.1]\label{thm:powertoprooducts}
The following statements are equivalent:
\begin{enumerate}
\item{\textsf{NCF}}
\item{The product of two metrizable groups, each with the property $\sfin(\onbd,\open^{wgp})$, is a topological group with the property $\sfin(\onbd,\open^{wgp})$.}
\end{enumerate}
\end{theorem}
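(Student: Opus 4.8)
The plan is to prove both implications of the equivalence, reducing each to a combinatorial statement about free filters on $\omega$ and invoking Blass's characterization of \textsf{NCF}. First I would carry out the standard reductions: by Lemma \ref{lemma:sfinsone} together with the equivalences of \cite{BKS} (Theorems 3, 6, 7), for a topological group the property $\sfin(\onbd,\open^{wgp})$ coincides with $\sone(\omeganbd,\open^{wgp})$ and with ``$\sfin(\onbd,\open)$ holds in every finite power''; thus statement (2) asserts exactly that the product of two metrizable ``Menger-bounded in all finite powers'' groups is again such a group. Next, using Guran's Theorem \ref{thm:Guran1} together with metrizability and separability, such a group may be taken to be a subgroup of a fixed Polish group — concretely $\Bbb Z^\omega$, or a countable product of finite cyclic groups — with a left-invariant metric, and I would recall the known combinatorial coding of metrizable o-bounded-in-all-finite-powers groups (from the work of Banakh, Tsaban, Zdomskyy and others): such a group $G$ is governed by a free semifilter $\mathcal{F}_G$ on $\omega$ recording the coordinate blocks on which $G$ is, in the relevant sense, totally bounded. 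The whole argument then rests on the dictionary entry that $G\times H$ has the property if and only if $\mathcal{F}_G$ and $\mathcal{F}_H$ are \emph{nearly coherent} (admit a common finite-to-one image); this is where Blass's theorem enters, since \textsf{NCF} is equivalent to the assertion that any two free filters on $\omega$ are nearly coherent (equivalently, $\mathfrak{u}<\mathfrak{g}$), and likewise to the coherence of the semifilters arising here.

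For $(1)\Rightarrow(2)$ I would prove the ``positive'' half of this dictionary directly. Given metrizable groups $G$ and $H$, each with $\sfin(\onbd,\open^{wgp})$, and a sequence $(\mathcal{W}_n:n\in\mathbb{N})$ of members of $\onbd$ for $G\times H$, refine each $\mathcal{W}_n$ to a box cover $\open(V_n)\times\open(V'_n)$. Applying $\sfin(\onbd,\open^{wgp})$ in each factor yields finite sets $F_n\subseteq G$, $F'_n\subseteq H$ and partitions of $\mathbb{N}$ into finite blocks witnessing that $\{F_n\otimes V_n:n\in\mathbb{N}\}$ and $\{F'_n\triangle V'_n:n\in\mathbb{N}\}$ are weakly groupable. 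Since $(F_n\times F'_n)(V_n\times V'_n)=(F_n\otimes V_n)\times(F'_n\triangle V'_n)$, what is needed is a single partition of $\mathbb{N}$ into finite blocks along which the product selection is weakly groupable for $G\times H$; under \textsf{NCF} the two filters determined by the coordinate groupings are nearly coherent, and a finite-to-one map witnessing this lets one pass to a common grouping. The delicate point is retaining control of \emph{all} finite subsets of $G\times H$ simultaneously while merging the two groupings — this is precisely the strength that near coherence supplies.

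For $(2)\Rightarrow(1)$ I argue contrapositively. Assuming \textsf{NCF} fails, fix free ultrafilters $\mathcal{U}_0,\mathcal{U}_1$ on $\omega$ that are not nearly coherent, and build metrizable groups $G_0,G_1$ realizing them — natural candidates are subgroups of $\Bbb Z^\omega$ (or of a product of finite cyclic groups) whose ``large support'' structure along a fixed interval partition is dictated by $\mathcal{U}_0$, $\mathcal{U}_1$ respectively, in the spirit of Example 2.12 of \cite{HRT} and the construction behind Theorem \ref{thm:kk+1}. The design must ensure that each $G_i$ has $\sfin(\onbd,\open^{wgp})$ (equivalently, is $\sfin(\onbd,\open)$ in every finite power) while $G_0\times G_1$ fails even $\sfin(\onbd,\open)$: one exhibits a single sequence of $\onbd$-covers of $G_0\times G_1$ by a diagonalization that plays the two filters against one another, so that any Menger selection for that sequence would produce a finite-to-one alignment of $\mathcal{U}_0$ with $\mathcal{U}_1$, contradicting non-coherence. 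Together with the positive half of the dictionary, this yields the equivalence with \textsf{NCF} by Blass's theorem.

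I expect the main obstacle to be two-fold, and concentrated in the $(2)\Rightarrow(1)$ direction: first, pinning down the exact combinatorial condition on a (semi)filter that corresponds, under the coding, to the selection property $\sfin(\onbd,\open^{wgp})$; and second, verifying that the groups $G_i$ built from $\mathcal{U}_i$ genuinely satisfy that property in \emph{every} finite power. A single power is routine, but uniformity across all powers — and identifying precisely how much of the ultrafilter's combinatorial strength is actually required — is where the real care lies. By contrast, once near coherence is available the merging argument underlying $(1)\Rightarrow(2)$ is comparatively mechanical.
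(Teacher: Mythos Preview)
The paper does not contain a proof of this theorem: it is quoted verbatim from \cite{HTZ} as Theorem 2.1 there, with no argument supplied in the present paper. There is therefore nothing in the paper to compare your proposal against.

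For what it is worth, your outline is broadly in the spirit of the actual argument in \cite{HTZ} and the surrounding literature (Banakh--Zdomskyy, Machura--Shelah--Tsaban): one passes from metrizable groups with $\sfin(\onbd,\open^{wgp})$ to associated (semi)filters on $\omega$, and the product question becomes a near-coherence question, at which point Blass's \textsf{NCF} enters. Your identification of the hard direction and its obstacles --- constructing, from two non-nearly-coherent ultrafilters, groups that individually have the property in all finite powers while their product fails even $\sfin(\onbd,\open)$ --- is accurate; that construction is the substantive content of the cited result and is not reproduced here.
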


This result raises the following potentially more modest analogue of the version (A) question:
\begin{problem}
Is it consistent that product of any two $\textsf{T}_0$  groups, each with the property $\sfin(\onbd,\open^{wgp})$, is a topological group with the property $\sfin(\onbd,\open^{wgp})$?
\end{problem}

\section{Further remarks on $\aleph_0$ bounded P groups}\label{sec:Pgroups}  

Towards further strengthening results about $\aleph_0$-bounded $P$ groups we next consider products of topological groups with the property $\sone(\onbd,\open)$, a stronger property than $\sfin(\onbd,\open)$. 
For finite powers there is the following prior result
\begin{theorem}[\cite{BKS}, Theorem 15] \label{thm:sfinpowers} For a topological group $(G,\otimes)$ the following are equivalent:
\begin{enumerate}
\item{Each finite power of $(G,\otimes)$ has the property $\sone(\onbd,\open)$}
\item{$(G,\otimes)$ has the property $\sone(\onbd,\open^{wgp})$}
\end{enumerate}
\end{theorem}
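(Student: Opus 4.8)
The plan is to prove the equivalence by mirroring, at the level of single-element selections, the proof of its already-recorded $\sfin$ counterpart (the equivalence of $\sfin(\onbd,\open)$ in all finite powers with $\sone(\omeganbd,\open^{wgp})$, i.e.\ Theorems 3, 6 and 7 of \cite{BKS}), using throughout the standard dictionary in which $\onbd$-covers of a group play the role of arbitrary open covers and $\omeganbd$-covers the role of $\omega$-covers. Two elementary reductions will be used constantly. First, every $\onbd$-cover of $G^k$ is refined by a \emph{cubical} cover $\open(V^k)=\{(x_1\otimes V)\times\cdots\times(x_k\otimes V):x_1,\dots,x_k\in G\}$ for a neighbourhood $V$ of the identity of $G$; and a selection made against refining cubical covers can be inflated, member by member, to a selection against the original covers, because the inflated family is still a cover and a weakly groupable cover stays weakly groupable when its members are enlarged. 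Second, given a sequence of cubical covers $\open(V_n^k)$ we may assume $V_1\supseteq V_2\supseteq\cdots$, by replacing $V_n$ with $\bigcap_{j\le n}V_j$ and inflating back. Thus one always works with a decreasing sequence of cubical covers.

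For $(1)\Rightarrow(2)$ I would proceed as follows. Let $(\open(U_n):n\in\naturals)$ be $\onbd$-covers of $G$ with $U_n$ decreasing. Fix in advance a partition $\naturals=\bigsqcup_{\ell}L_\ell$ into consecutive finite intervals with $|L_\ell|\to\infty$, and an assignment $\ell\mapsto k_\ell\in\naturals$ with $k_\ell\le|L_\ell|$ in which every positive integer occurs for infinitely many $\ell$; inside each $L_\ell$ let $P_\ell=\{p_{\ell,1}<\cdots<p_{\ell,k_\ell}\}$ be its last $k_\ell$ elements. For each fixed $k$, enumerate the blocks $\ell$ with $k_\ell=k$ and apply hypothesis $(1)$---$\sone(\onbd,\open)$ on $G^k$---to the sequence whose term for the $t$-th such block $\ell$ is the cubical cover $\open(U_q^k)$ with $q=\max P_\ell$; this produces, for each such block $\ell$, a single cube $(x^{\ell}_1,\dots,x^{\ell}_k)\otimes U_{\max P_\ell}^k$, and over all these blocks the cubes cover $G^k$. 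Now build the selection on $G$: at position $p_{\ell,i}\in P_\ell$ put $B_{p_{\ell,i}}=x^{\ell}_i\otimes U_{p_{\ell,i}}$, a member of $\open(U_{p_{\ell,i}})$ which (since $U_{\max P_\ell}\subseteq U_{p_{\ell,i}}$) contains $x^{\ell}_i\otimes U_{\max P_\ell}$; at every other position put any member of the relevant cover. If $F\subseteq G$ is finite and $|F|=k$, the $G^k$-selection places the $k$-tuple listing $F$ into one of the cubes above, for some block $\ell$ with $k_\ell=k$, hence $F\subseteq\bigcup_{n\in P_\ell}B_n\subseteq\bigcup_{n\in L_\ell}B_n$. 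So $\{B_n:n\in\naturals\}$ is a weakly groupable cover of $G$ with grouping $\{L_\ell\}$ (after the routine adjustment making it an infinite family with disjoint blocks), giving $(2)$.

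For $(2)\Rightarrow(1)$, fix $k$ and a decreasing sequence of cubical covers $\open(V_n^k)$ of $G^k$. Applying $\sone(\onbd,\open^{wgp})$ to the $\onbd$-covers $\open(V_n)$ of $G$ yields translates $B_n=y_n\otimes V_n$ and a partition of $\naturals$ into finite blocks whose unions $H_\ell=\bigcup_{n\in L_\ell}B_n$ form an $\omega$-cover of $G$. If $(g_1,\dots,g_k)$ has all coordinates in one $H_\ell$, then each $g_i\in y_{n_i}\otimes V_{n_i}$ with $n_i\in L_\ell$, so $(g_1,\dots,g_k)$ lies in the cube $(y_{n_1},\dots,y_{n_k})\otimes V_m^k$ with $m=\min_i n_i$, which inflates to a member of $\open(V_m^k)$; ranging over the block produces a finite family of cubes, which must then be distributed over cover-indices to form the required one-member-per-cover selection. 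The step I expect to be the real obstacle is exactly this distribution: a block of size $t$ can force up to $t^k$ distinct cubes of $G^k$ into play, while only about $t$ cover-indices carry a side neighbourhood as small as that block's scale, so a naive placement runs out of room---this is why the $\sfin$ version passes painlessly (it is allowed finitely many cubes per cover) and the $\sone$ version is delicate. I would resolve it by choosing the sequence fed to $\sone(\onbd,\open^{wgp})$ in advance so that it consists of long constant runs of a single cover and interleaving the coordinate selections across these runs, arranging that the weakly groupable blocks meet few enough runs for the exact index-matching forced by single-element selection to be met; this follows the corresponding passage of \cite{BKS}, with the extra care just indicated. Once the cubes are in place their union covers $G^k$, and inflating each to a member of the original $\onbd$-cover gives $\sone(\onbd,\open)$ for $G^k$.
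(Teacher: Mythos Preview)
The paper does not prove this theorem; it is quoted as a prior result, \cite{BKS}, Theorem 15, and no argument is supplied here. So there is no ``paper's own proof'' to compare against, and I can only assess your proposal on its merits.

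Your argument for $(1)\Rightarrow(2)$ is sound. The block--dimension assignment $\ell\mapsto k_\ell$, followed by one application of $\sone(\onbd,\open)$ on each $G^k$ along the blocks with $k_\ell=k$, cleanly produces a single translate per index and a fixed partition witnessing weak groupability; the monotonicity $U_{\max P_\ell}\subseteq U_{p_{\ell,i}}$ makes the inflation step legitimate.

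The direction $(2)\Rightarrow(1)$, however, is not yet a proof. You correctly locate the obstacle: a weakly groupable block of size $t$ generates on the order of $t^k$ candidate cubes in $G^k$, while $\sone$ allows only one cube per cover index, and the block structure is revealed \emph{after} the selection, not before. Your proposed remedy, feeding ``long constant runs'' of each $\open(V_n)$ into $\sone(\onbd,\open^{wgp})$, does not by itself resolve this: the partition into weakly groupable blocks is not under your control, and a block may sit entirely inside one run (forcing many cubes to compete for a single original cover index) or straddle many runs in a way you have not constrained. The phrase ``arranging that the weakly groupable blocks meet few enough runs'' is exactly the point that needs an argument, and none is given. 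Appealing to ``the corresponding passage of \cite{BKS}, with the extra care just indicated'' defers rather than supplies the missing idea, since the $\sfin$ version you cite does not face this index-matching problem at all (finitely many cubes per index are permitted there). As it stands, $(2)\Rightarrow(1)$ is a plausible outline with a genuine gap at its center.
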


{\flushleft Lemma \ref{lemma:obPgp1} can be strengthened as follows:}
\begin{lemma}\label{lemma:obPgp3}
Any $\aleph_0$-bounded $P$ group has the property $\sone(\onbd,\open^{gp})$
\end{lemma}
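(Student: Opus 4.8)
The plan is to imitate the proof of Lemma \ref{lemma:obPgp1}, exploiting the $P$-group hypothesis to collapse a whole sequence of neighborhood covers into a single one, and then to upgrade the resulting selection from a plain open cover to a \emph{groupable} open cover by partitioning the countably many chosen sets appropriately. First I would take a sequence $(\mathcal{U}_n:n\in\mathbb{N})$ of $\onbd$-covers of $G$ and, for each $n$, fix a neighborhood $M_n$ of the identity with $\mathcal{U}_n = \mathcal{O}(M_n)$. Since $G$ is a $P$-group, $M = \bigcap\{M_n:n\in\mathbb{N}\}$ is again an open neighborhood of the identity, so $\mathcal{U} = \{x\otimes M:x\in G\}\in\onbd$. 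Using $\aleph_0$-boundedness, enumerate a countable set $\{x_k:k\in\mathbb{N}\}$ with $\{x_k\otimes M:k\in\mathbb{N}\}$ a cover of $G$.

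The point of the groupability refinement is that for each $n$ we have $x_k\otimes M\subseteq x_k\otimes M_n$, so we are free to distribute the points $x_k$ among the innings. Concretely, I would pick a partition $\mathbb{N} = \bigsqcup_{n\in\mathbb{N}} I_n$ into finite blocks $I_n$ (say an increasing run of consecutive integers, with $\min I_{n+1} > \max I_n$), and then in inning $n$ select from $\mathcal{U}_n$ the finite family $\mathcal{U}_n' = \{x_k\otimes M_n : k\in I_n\}$. Wait — $\sone$ requires a \emph{single} set chosen per inning, not a finite family, so the cleaner formulation is: re-index so that in inning $n$ we produce the single set $B_n = x_{k(n)}\otimes M_{j(n)}$, where $n\mapsto (k(n),j(n))$ runs through $\mathbb{N}\times\mathbb{N}$ in such a way that each value of $k$ is hit infinitely often across the innings and the set of innings $n$ using a fixed $j$ is infinite and, crucially, the map is arranged into blocks $\mathcal{W}_m$ of consecutive innings so that each $x_k\otimes M$ (hence each point of $G$) lies in $\bigcup\mathcal{W}_m$ for all sufficiently large $m$. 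Since $\{x_k\otimes M:k\in\mathbb{N}\}$ covers $G$, an enumeration of the pairs in which, by stage of the $m$-th block, all of $x_1\otimes M,\dots,x_m\otimes M$ have appeared will do: each element of $G$ lies in some $x_k\otimes M\subseteq x_k\otimes M_j$, and once $m\ge k$ that set $x_k\otimes M$ is a subset of a member of every block $\mathcal{W}_m$ from then on, establishing the defining property of $\open^{gp}$.

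The one genuine obstacle is reconciling the bookkeeping of $\open^{gp}$ — which demands a partition of the chosen cover into finite pieces $\mathcal{W}_m$ with each point eventually in every $\bigcup\mathcal{W}_m$ — with the single-selection format of $\sone$, and ensuring the pieces are genuinely disjoint (the $B_n$ must be distinct as chosen sets, or one works with the indexed family and notes the partition is of the index set). Here a small care is needed: distinct innings may a priori produce the same set $x_k\otimes M_j$; this is harmless if one reads $\open^{gp}$ as referring to the indexed sequence $(B_n)$ with a partition of $\mathbb{N}$, which is the standard convention, or one can perturb choices (using that $G$ is infinite, or that the $M_j$ strictly decrease along a cofinal subsequence by the $P$-property) to keep them distinct. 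Granting that, the verification that each $x\in G$ belongs to $\bigcup\mathcal{W}_m$ for all but finitely many $m$ is exactly the computation sketched above, and since $\open^{gp}\subseteq\open$ this in particular reproves Lemma \ref{lemma:obPgp1}. As the footnote to Theorem \ref{thm:PgpSel} indicates, combining this lemma with Lemma \ref{lemma:gptogamma} then yields an alternative route to $\sone(\omeganbd,\Gamma)$ for $\aleph_0$-bounded $P$ groups.
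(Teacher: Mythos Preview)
Your argument is correct and in fact takes a more elementary route than the paper's. The paper's proof of Lemma~\ref{lemma:obPgp3} invokes Theorem~\ref{thm:PgpSel} as a black box: it applies $\sone(\omeganbd,\Gamma)$ to the constant sequence $\mathcal{V}_n=\{F\otimes U:F\subset G\text{ finite}\}$ to obtain a $\gamma$-cover $\{F_n\otimes U\}$, and then splits each finite $F_n$ into singletons $g_i$ to produce the groupable selector $(g_i\otimes U_i)$. You bypass Theorem~\ref{thm:PgpSel} entirely: after intersecting to a single neighborhood $M$ you use only $\aleph_0$-boundedness to obtain a countable cover $\{x_k\otimes M\}$, and then arrange the single selections $B_n=x_{k(n)}\otimes M_n$ in finite blocks $\mathcal{W}_m$ so that block $m$ already covers $x_1\otimes M\cup\cdots\cup x_m\otimes M$. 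This buys something real: since your proof does not use Theorem~\ref{thm:PgpSel}, combining it with Lemma~\ref{lemma:gptogamma} yields a genuinely independent derivation of $\sone(\omeganbd,\Gamma)$ for $\aleph_0$-bounded $P$ groups, which is exactly what the footnote to Theorem~\ref{thm:PgpSel} promises; the paper's own proof of Lemma~\ref{lemma:obPgp3}, by contrast, makes that ``alternative'' route circular.

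One wobble to clean up: your intermediate formulation $B_n=x_{k(n)}\otimes M_{j(n)}$ with a free index $j(n)$ is not legitimate for $\sone(\onbd,\open^{gp})$, since in inning $n$ the selection must come from $\mathcal{U}_n=\mathcal{O}(M_n)$, forcing $j(n)=n$. Your final description (blocks of consecutive innings, with block $m$ large enough that $k(n)$ ranges over $\{1,\dots,m\}$ as $n$ runs through it) already respects this constraint, so simply drop the $j(n)$ detour. The disjointness worry you flag is the same one the paper's proof silently ignores; reading the partition as a partition of the index set $\mathbb{N}$ is the standard fix.
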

\begin{proof}
Let $(G,\otimes)$ be an $\aleph_0$-bounded $P$ group.  Let $(\mathcal{U}_n:n\in\mathbb{N})$ be a sequence of $\onbd$-covers of $G$. For each $n$ choose a neighborhood $U_n$ of the identity such that $\mathcal{U}_n = \mathcal{O}(U_n) = \{g\otimes U_n:g\in G\}$. Since $G$ is a $P$ space the $\textsf{G}_{\delta}$ set $U = \bigcap\{U_n:n\in\mathbb{N}\}$ is an open neighborhood of the identity, and the $\onbd$-cover $\{g\otimes U:g\in G\}$ of $G$ is a refinement of each of the $\onbd$ covers $\mathcal{U}_n$. For each $n$ set $\mathcal{V}_n$ be the $\omeganbd$ cover $\{F\otimes U:\; F\subset G \mbox{ finite}\}$.  

As was shown in Theorem \ref{thm:PgpSel}, this group has the selection property $\sone(\omeganbd,\Gamma)$. Applying this selection property of $G$ to the sequence $(\mathcal{V}_n:n\in\mathbb{N})$ we find for each $n$ a set $V_n\in\mathcal{V}_n$ such that $\{V_n:n\in\mathbb{N}\}$ is a $\gamma$-cover of $G$ - that is, for each $g\in G$ we have for all but finitely many $n$ that $g\in V_n$. For each $n$ fix the finite set $F_n\subset G$ such that $V_n = F_n\otimes U$. Now let $n_1,\; n_2,\; \cdots,\; n_k,\; \cdots$ be natural numbers such that for each $i$ we have $n_i = \vert F_i\vert$. Next choose elements $g_1,\; g_2,\; \cdots,\; g_n,\; \cdots$ from $G$ as follows: $g_1,\cdots,g_{n_1}$ lists the distinct elements of $F_1$, $g_{n_1+1},\cdots,g_{n_1+n_2}$ lists the distinct elements of $F_2$,  and in general
$\{g_{n_1+\cdots+n_{k-1}+1},\cdots,g_{n_1+n_2+\cdots+n_k}\}$ lists the distinct elements of $F_k$, and so on.
Thus for each $k$ we have $V_k = \cup\{g_i\otimes U: n_1+\cdots+n_{k-1} < i \le n_1+\cdots+n_k\}$.

{\flushleft{\bf Claim: }} $\{g_i\otimes U_i:i\in\mathbb{N}\}$ is a groupable open cover of $G$.
For let  an $h\in G$ be given. Since $(V_n:n\in\mathbb{N})$ is a $\gamma$ cover of $G$, fix a $k$ such that for all $m\ge k$ it is true that $h\in V_k$. Then for all $m\ge k$, the element $g$ of $G$ is in $\bigcup\{g_i\otimes U_i:n_1+\cdots+n_{k-1}<i\le n_1+\cdots+n_k\}$, confirming that the selector $(g_i\otimes U_i:i\in\mathbb{N})$ of the original sequence of $\onbd$ covers is a groupable open cover of $G$. 
\end{proof}

Lemma \ref{lemma:obPgp3} provides the following alternative derivation that $\aleph_0$-bounded $P$ groups have the property $\sone(\omeganbd,\Gamma)$:
\begin{lemma}\label{lemma:gptogamma}
If a topological group has the property $\sone(\onbd,\open^{gp})$ then it has the property $\sone(\omeganbd,\Gamma)$.
\end{lemma}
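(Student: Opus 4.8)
The plan is to feed the selection principle $\sone(\onbd,\open^{gp})$ the \emph{single–translate} covers $\mathcal{O}(V_n)$ built from a given sequence of $\omeganbd$-covers, and then to \emph{pack} the resulting groupable cover into a $\gamma$-cover, exploiting that any groupable cover becomes a $\gamma$-cover once one replaces each block by its union. The only real difficulty is that such a block–union is a finite union of translates of \emph{several different} neighborhoods, whereas a member of $\mathcal{W}_n:=\Omega(V_n)$ is required to be a finite union of translates of the \emph{one} neighborhood $V_n$. I would handle this by a normalization of the neighborhoods together with a reindexing of the blocks.

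First I would reduce to a decreasing sequence of neighborhoods. Given a sequence $(\mathcal{W}_n:n\in\mathbb{N})$ of $\omeganbd$-covers, fix neighborhoods $V_n$ of $id$ with $\mathcal{W}_n=\Omega(V_n)$ and set $\widetilde V_n=V_1\cap\cdots\cap V_n$. Each $\widetilde V_n$ is again a neighborhood of $id$ --- this is only a \emph{finite} intersection of open sets, so no $P$-space hypothesis is involved --- and $\widetilde V_1\supseteq\widetilde V_2\supseteq\cdots$ with $\widetilde V_n\subseteq V_n$. If I can produce finite sets $F_n\subseteq G$ with $\{F_n\otimes\widetilde V_n:n\in\mathbb{N}\}$ a $\gamma$-cover, then $\{F_n\otimes V_n:n\in\mathbb{N}\}$ is again a $\gamma$-cover (the members only enlarge, so every point still lies in all but finitely many of them) and witnesses $\sone(\omeganbd,\Gamma)$ for $(\mathcal{W}_n:n\in\mathbb{N})$. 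So from now on I may assume $V_1\supseteq V_2\supseteq\cdots$. (If such a family turns out to be finite, then $G$ is totally bounded, and $\sone(\omeganbd,\Gamma)$ holds trivially as in Lemma \ref{lemma:Th5.3}; I would simply note this and proceed.)

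Next, for each $n$ the family $\mathcal{O}(V_n)=\{x\otimes V_n:x\in G\}$ is an element of $\onbd$. Applying $\sone(\onbd,\open^{gp})$ to the sequence $(\mathcal{O}(V_n):n\in\mathbb{N})$ produces, for each $n$, a point $g_n\in G$ such that $\mathcal{G}:=\{g_n\otimes V_n:n\in\mathbb{N}\}$ is a groupable open cover of $G$. I then fix a witnessing partition $\mathcal{G}=\bigcup\{\mathcal{G}_m:m\in\mathbb{N}\}$ into finite pairwise disjoint pieces such that every element of $G$ lies in $\bigcup\mathcal{G}_m$ for all but finitely many $m$; equivalently, $\{\bigcup\mathcal{G}_m:m\in\mathbb{N}\}$ is a $\gamma$-cover of $G$.

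Now the reindexing. For each $n$ the translates $g_1\otimes V_1,\dots,g_{n-1}\otimes V_{n-1}$ occur in at most $n-1$ of the blocks $\mathcal{G}_m$, so only finitely many indices $m\ge n$ have $\mathcal{G}_m$ meeting $\{g_j\otimes V_j:j<n\}$; let $\phi(n)$ be the least $m\ge n$ with $\mathcal{G}_m\cap\{g_j\otimes V_j:j<n\}=\emptyset$. Then $\phi(n)\ge n$, and every translate occurring in $\mathcal{G}_{\phi(n)}$ is of the form $g_j\otimes V_j$ with $j\ge n$, hence (the neighborhoods decrease) is contained in $g_j\otimes V_n$. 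Picking one such index $j$ for each of the finitely many members of $\mathcal{G}_{\phi(n)}$ and collecting the corresponding $g_j$'s into a finite set $F_n$, I obtain
\[
  B_n:=F_n\otimes V_n\in\Omega(V_n)=\mathcal{W}_n,\qquad B_n\supseteq\textstyle\bigcup\mathcal{G}_{\phi(n)} .
\]
Given $x\in G$, choose $M_x$ with $x\in\bigcup\mathcal{G}_m$ for every $m\ge M_x$; then $x\in B_n$ whenever $\phi(n)\ge M_x$, and since $\phi(n)\ge n$ this holds for all $n\ge M_x$. Thus $\{B_n:n\in\mathbb{N}\}$ is a $\gamma$-cover of $G$ and witnesses $\sone(\omeganbd,\Gamma)$. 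The step I expect to need the most care is precisely this combination of the decreasing–neighborhood normalization with the reindexing $\phi$: without both, the block unions are finite unions of translates of mutually incomparable neighborhoods and need not lie in any single $\mathcal{W}_n$, while a naive assignment of one block per index leaves infinitely many indices $n$ with no admissible choice of $B_n$.
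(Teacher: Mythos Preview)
Your proof is correct and follows essentially the same route as the paper: normalize to a decreasing sequence of neighborhoods, apply $\sone(\onbd,\open^{gp})$ to the associated $\onbd$-covers $\mathcal{O}(V_n)$, and then exploit the monotonicity $V_j\subseteq V_n$ for $j\ge n$ to absorb each block of the groupable partition into a single set $F_n\otimes V_n\in\Omega(V_n)$. The only cosmetic difference is bookkeeping in the packing step: the paper tacitly passes to an \emph{interval} partition $n_0<n_1<\cdots$ of the index set (so that block~$k$ automatically consists of indices $\ge k$), whereas you keep an arbitrary partition and introduce the reindexing map $\phi$ to locate, for each $n$, a block containing only indices $\ge n$; your version is a bit more explicit about why such a block exists.
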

\begin{proof}
Let $(G,\otimes)$ be a topological group which has the property $\sone(\onbd,\open^{gp})$. Let $(\mathcal{U}_n:n\in\mathbb{N})$ be a sequence of $\omeganbd$ covers of $G$. For each $n$ fix $U_n$, the neighborhood of the identity element for which $\mathcal{U}_n = \{F\otimes U_n:F\subset G \mbox{ finite}\}$.

For each $n$ set $V_n = \bigcap\{U_j:j\le n\}$, a neighborhood if the identity element of the group $(G,\otimes)$. Set $\mathcal{V}_n = \{F\otimes V_n:F\subset G \mbox{ finite}\}$, a member of $\omeganbd$ that refines $\mathcal{U}_n$. 

Now apply to selection principle $\sone(\onbd,\open^{gp})$ to each of the $\onbd$ covers $\mathcal{A}_n = \{g\otimes V_n: n\in\mathbb{N}\}$: For each $n$ choose  
an $A_n\in\mathcal{A}_n$ such that $\{A_n:n\in\mathbb{N}\} \in\open^{gp}$. Fix a sequence $n_1<n_2<\cdots<n_k<\cdots$ of natural numbers such that for each $g\in G$, for all but finitely many $k$, $g$ is an element of $\bigcup\{A_m:n_{k-1}<m\le n_k\}$. For each $m$ fix $g_m\in G$ such that $A_m = g_m\otimes V_m$. Then define finite sets $F_1 = \{g_n:n \le n_1\}$ and for each $k$, $F_k = \{g_j: n_{k-1}<j \le n_k\}$.

For each $k$ set $U_k = F_k\otimes V_k$, an element of $\mathcal{U}_k$. Then $\{U_k:k\in\mathbb{N}\}$ is a $\gamma$-cover of $G$, 
for let an $x\in G$ be given. Choose $k$ so large that for all $m\ge k$, $x$ is a member of $\bigcup_{n_{m-1}<j\le n_m}g_j\otimes V_j$. Since $\bigcup_{n_{m-1}<j\le n_m}g_j\otimes V_j \subseteq U_m$, it follows that for all $m\ge k$, $x$ is a member of $U_m$. 
\end{proof}

\section{Conclusion}

In this paper we merely touched on four extensively explored topics in the arena of $\aleph_0$-bounded groups. Among the numerous exploration possibilities we pose here only the following one about cardinalities: 

We have noted that there are no \emph{a priori} theoretical restrictions on the cardinality that a $T_0$ group with the property $\sfin(\onbd,\open)$, or even $\sone(\omeganbd,\Gamma)$, can have. Each infinite cardinality is possible. As was pointed out in Theorems 8 and Corollary 17 of \cite{MS1}, the same holds for $T_0$ groups with the property $\sone(\onbd,\open)$, or even the much stronger property that the group is an $\aleph_0$-bounded P group, or a group in which TWO has a winning strategy in the game $\gone(\onbd,\open)$. However, the following issue regarding the achievable cardinality for a given type of $\aleph_0$-bounded group is much more subtle\footnote{Also, in the class of Lindel\"of spaces, for example, there are no constraints on the cardinalities achievable in the class oof $T_0$ Lindel\"of spaces, yet there are constraints on the cardinalities of subspaces that are Lindel\"of, as can for example be gleaned from \cite{KT}. 
}: Let an $\aleph_0$-bounded $T_0$ group be given. It necessarily has subgroups with properties $\sfin(\onbd,\open)$, $\sone(\omeganbd,\Gamma)$, $\sone(\onbd,\open)$, and any of the other nonempty selection based classes obtained by varying the types of open covers appearing. The question of what cardinality restrictions there may be on subgroups of an $\aleph_0$ bounded $T_0$ group has been extensively studied in the case of the property $\sone(\onbd,\open)$. For example in \cite{GS} and in \cite{MS2} the following hypothesis\footnote{This hypothesis is a generalization of the classical  Borel Conjecture} is investigated:
\begin{quote}
Each subgroup with property $\sone(\onbd,\open)$ of an $\aleph_0$-bounded group of weight $\kappa$ has cardinality at most $\kappa$ 
\end{quote}
It would be interesting to know if there are similar feasible hypotheses of cardinality bounds for subgroups with property $\sone(\omeganbd,\Gamma)$ or with property $\sfin(\onbd,\open)$ of $\aleph_0$ bounded groups that are not $\sigma$ totally bounded.

\end{document}